\documentclass[10pt]{article}
\usepackage{amsmath}
\usepackage{amssymb}
\usepackage{amsthm}
\usepackage{mathrsfs}

\begin{document}
\title{The boundedness of Bochner-Riesz operators on the weighted weak Hardy spaces}
\author{Hua Wang \footnote{E-mail address: wanghua@pku.edu.cn.}\\
\footnotesize{Department of Mathematics, Zhejiang University, Hangzhou 310027, China}}
\date{}
\maketitle

\begin{abstract}
Let $w$ be a Muckenhoupt weight and $WH^p_w(\mathbb R^n)$ be the weighted weak Hardy spaces. In this paper, by using the atomic decomposition of $WH^p_w(\mathbb R^n)$, we will show that the maximal Bochner-Riesz operators $T^\delta_*$ are bounded from $WH^p_w(\mathbb R^n)$ to $WL^p_w(\mathbb R^n)$ when $0<p\le1$ and $\delta>n/p-{(n+1)}/2$. Moreover, we will also prove that the Bochner-Riesz operators $T^\delta_R$ are bounded on $WH^p_w(\mathbb R^n)$ for $0<p\le1$ and $\delta>n/p-{(n+1)}/2$. Our results are new even in the unweighted case.\\
MSC (2010): 42B15; 42B25; 42B30 \\
Keywords: Bochner-Riesz operators; weighted weak Hardy spaces; weighted weak Lebesgue spaces; $A_p$ weights; atomic decomposition
\end{abstract}

\section{Introduction and main results}

The Bochner-Riesz operators of order $\delta>0$ in $\mathbb R^n$ are defined initially for Schwartz functions in terms of Fourier transforms by
\begin{equation*}
\big(\widehat{T^\delta_R f}\big)(\xi)=\Big(1-\frac{|\xi|^2}{R^2}\Big)^\delta_+\hat{f}(\xi), \quad 0<R<\infty,
\end{equation*}
where $\hat{f}$ denotes the Fourier transform of $f$. The associated maximal Bochner-Riesz operator is defined by
\begin{equation*}
T^\delta_*f(x)=\sup_{R>0}\big|T^\delta_Rf(x)\big|.
\end{equation*}

These operators were first introduced by Bochner \cite{bochner} in connection with summation of multiple Fourier series and played an important role in harmonic analysis. The problem concerning the spherical convergence of Fourier integrals have led to the study of their $L^p$ boundedness. As for their $H^p$ boundedness, Sj\"olin \cite{sjolin} and Stein, Taibleson and Weiss \cite{stein} proved the following theorem (see also [9, page 130]).

\newtheorem*{thmi}{Theorem I}
\begin{thmi}
Suppose that $0<p\le1$ and $\delta>n/p-(n+1)/2$. Then there exists a constant $C>0$ independent of $f$ and $R$ such that
\begin{equation*}
\big\|T^\delta_R(f)\big\|_{H^p}\le C\|f\|_{H^p}.
\end{equation*}
\end{thmi}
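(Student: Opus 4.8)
The plan is to combine the classical atomic/molecular description of $H^p$ with the standard decay estimates for the Bochner--Riesz kernel.

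\textbf{Step 1: reductions.} Write $T^\delta_R f=f*\phi_R$ with $\phi_R(x)=R^n\phi(Rx)$ and $\widehat\phi(\xi)=(1-|\xi|^2)^\delta_+$. A change of variables gives $T^\delta_R\big(f(\lambda\,\cdot)\big)=\big(T^\delta_{R/\lambda}f\big)(\lambda\,\cdot)$, and since $\|g(\lambda\,\cdot)\|_{H^p}=\lambda^{-n/p}\|g\|_{H^p}$, it suffices to prove the estimate for $R=1$ with $C$ independent of $f$. Because $\|(1-|\xi|^2)^\delta_+\|_\infty\le1$, $T^\delta_1$ is bounded on $L^2$ by Plancherel; hence, by the atomic decomposition of $H^p$ together with the usual density/continuity argument (finite linear combinations of $L^2$--atoms are dense in $H^p$, and the atomic coefficients satisfy $\sum_j|\lambda_j|^p\lesssim\|f\|_{H^p}^p$), it is enough to produce $C$ with $\|T^\delta_1 a\|_{H^p}\le C$ for every $(p,\infty,s)$--atom $a$, $s:=\lfloor n(1/p-1)\rfloor$.

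\textbf{Step 2: kernel estimates.} From $\widehat\phi(\xi)=(1-|\xi|^2)^\delta_+$ one has the Bessel representation $\phi(x)=c_{n,\delta}\,|x|^{-(n/2+\delta)}J_{n/2+\delta}(2\pi|x|)$; combining boundedness of $\phi$ near the origin with the asymptotics $J_\nu(t)=O(t^{-1/2})$ (differentiation not degrading this rate) yields, for each multi--index $\alpha$,
\[
\big|\partial^\alpha\phi(x)\big|\le C_\alpha\,(1+|x|)^{-(n+1)/2-\delta}.
\]

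\textbf{Step 3: $T^\delta_1 a$ is a molecule.} Fix a $(p,\infty,s)$--atom $a$ supported in $B=B(x_0,r)$, with $\|a\|_\infty\le|B|^{-1/p}$ and $\int x^\beta a=0$ for $|\beta|\le s$. I would show $T^\delta_1 a$ is a uniformly bounded multiple of a Taibleson--Weiss $(p,2,s)$--molecule centered at $x_0$ and then invoke the molecular characterization of $H^p$. Three properties are needed. (i) \emph{Moment cancellation:} $\widehat{T^\delta_1 a}=(1-|\xi|^2)^\delta_+\,\widehat a$; since $\widehat a$ vanishes to order $s+1$ at the origin and $(1-|\xi|^2)^\delta_+$ is $C^\infty$ there, so does the product, whence $\int x^\beta T^\delta_1 a=0$ for $|\beta|\le s$ (absolute convergence being supplied by (iii)). (ii) \emph{$L^2$ size:} $\|T^\delta_1 a\|_2\le\|a\|_2\le\|a\|_\infty|B|^{1/2}\le|B|^{1/2-1/p}$. (iii) \emph{Weighted $L^2$ decay:} for $|x-x_0|>2r$, expand $\phi(x-\cdot)$ in its degree--$s$ Taylor polynomial at $x_0$, use the cancellation of $a$, and bound the remainder by Step 2 with $|\alpha|=s+1$ when $r\lesssim1$, or by the bare size estimate (no expansion) when $r\gtrsim1$; this gives a pointwise bound $|T^\delta_1 a(x)|\lesssim|B|^{1-1/p}\,(\min\{r,1\})^{s+1}\,(1+|x-x_0|)^{-(n+1)/2-\delta}$, which, together with (ii) on $2B$, yields $\big\|\,|{\cdot}-x_0|^{n\theta}\,T^\delta_1 a\,\big\|_2\lesssim r^{n\theta}\,|B|^{1/2-1/p}$ for every $\theta$ slightly larger than $1/p-1/2$ --- precisely what the molecular characterization requires --- as soon as $\delta>n/p-(n+1)/2$. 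This is the only place the hypothesis on $\delta$ enters.

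\textbf{Step 4.} Combining (i)--(iii), $T^\delta_1 a$ is a fixed multiple of a $(p,2,s)$--molecule, so $\|T^\delta_1 a\|_{H^p}\le C$ uniformly in $a$; with Step 1 this proves the theorem. The main obstacle is Step 3(iii): one must use the \emph{sharp} decay exponent $(n+1)/2+\delta$ of the kernel and carefully bookkeep the two scales $r$ and $1=R$ --- in particular, not using all $s+1$ moments of $a$ when $r$ is large (where the bare size estimate already suffices and the extra powers of $r$ would be harmful) --- so that the molecular bound comes out with the correct scale--invariant normalization and the threshold obtained is exactly $\delta>n/p-(n+1)/2$ rather than a strictly larger exponent. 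Alternatively one may estimate the grand maximal function of $T^\delta_1 a$ directly, splitting $\mathbb{R}^n$ into $2B$ (handled by H\"older's inequality and the $L^2$ bound) and the annuli $2^{k+1}B\setminus2^k B$ (handled by the same Taylor--expansion/kernel--decay analysis applied to $\phi*\varphi_t$ and summation of a geometric series); or one may invoke a Calder\'on--Torchinsky--type $H^p$ multiplier theorem, whose hypothesis for $m(\xi)=(1-|\xi|^2)^\delta_+$ again reduces to $\delta>n/p-(n+1)/2$.
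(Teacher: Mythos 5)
Your proposal is correct in outline, but note that the paper never proves Theorem I: it is quoted from Sj\"olin \cite{sjolin} and Stein--Taibleson--Weiss \cite{stein} (see also Lu's book \cite{lu1}), so there is no in-paper proof to match against. Your route --- rescale to $R=1$, reduce to $(p,\infty,s)$-atoms, and show via the kernel bound $|D^\alpha\phi(x)|\lesssim(1+|x|)^{-(n+1)/2-\delta}$ that $T^\delta_1a$ is a uniformly bounded $(p,2,s)$-molecule --- is the classical molecular proof, and the bookkeeping you indicate does close: the moment identity is legitimate because $(n+1)/2+\delta>n+s$ gives absolute convergence (the same Fourier-side Leibniz computation appears as identity (3) in the paper's Lemma 3.3), and choosing $\theta$ with $1/p-1/2<\theta<(\delta+1/2)/n$, which is possible exactly when $\delta>n/p-(n+1)/2$, makes the molecular norm scale out to a constant. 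This is genuinely different from the machinery the paper builds for its own (weighted weak) results: there, one fixes the critical auxiliary exponent $p_1<p$ with $\delta=n/p_1-(n+1)/2$, proves pointwise bounds for $T^\delta_*b$ and for the grand maximal function $G^+_w(T^\delta_Rb)$ outside dilated cubes (Lemmas 3.1--3.3), and sums over the atomic decomposition directly --- which is essentially the ``alternative'' you mention at the end. The molecular route buys you a cleaner statement (no ancillary hypothesis), whereas the grand-maximal-function route as implemented in the paper forces the extra assumption that $\delta-(n-1)/2$ not be a positive integer (needed in Lemma 3.3 so that $n/p_1-n-j>0$), but it transfers to the weighted weak setting where a molecular theory is not available. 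Two small points you should make explicit if you write this up: justify $|D^\alpha\phi(x)|\lesssim(1+|x|)^{-(n+1)/2-\delta}$ for all $\alpha$ via the Bessel recurrences (this is the content of the paper's Lemma 3.1), and justify $T^\delta_1f=\sum_j\lambda_jT^\delta_1a_j$ in $\mathscr S'(\mathbb R^n)$ (e.g.\ from convergence of the atomic series and the integrability and smoothness of $\phi$) rather than appealing to a bare ``uniformly bounded on atoms implies bounded'' principle, which is not valid in general.
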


In \cite{stein}, the authors also considered weak type estimate for the maximal Bochner-Riesz operator $T^\delta_*$ at the critical index $\delta=n/p-(n+1)/2$ and showed the following inequality is sharp.

\newtheorem*{thmii}{Theorem II}
\begin{thmii}
Suppose that $0<p<1$ and $\delta=n/p-(n+1)/2$. Then there exists a constant $C>0$ independent of $f$ such that
\begin{equation*}
\sup_{\lambda>0}\lambda^p\big|\big\{x\in\mathbb R^n:T^\delta_*f(x)>\lambda\big\}\big|\le C\|f\|^p_{H^p}.
\end{equation*}
\end{thmii}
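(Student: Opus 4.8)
\noindent The plan is to run the atomic-decomposition machinery: reduce the claim to a uniform estimate for $T^\delta_*$ applied to a single atom, and then assemble those estimates in the weak space $L^{p,\infty}$. The assembly is the one genuinely delicate point, since for $0<p<1$ the space $L^{p,\infty}$ has no $p$-triangle inequality, so one cannot simply add up the single-atom bounds. Concretely, let $f\in H^p$ with $\|f\|_{H^p}\le1$ and fix an atomic decomposition $f=\sum_j\lambda_j a_j$, where each $a_j$ is a $(p,\infty)$-atom supported in $B_j=B(x_j,r_j)$, with $\|a_j\|_\infty\le|B_j|^{-1/p}$ and $\int a_j(y)\,y^\gamma\,dy=0$ for $|\gamma|\le N:=\lfloor n/p-n\rfloor$, and $\sum_j|\lambda_j|^p\le C\|f\|_{H^p}^p$. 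Since $T^\delta_R$ is convolution with $\phi_{1/R}(x)=R^n\phi(Rx)$, $\widehat\phi(\xi)=(1-|\xi|^2)^\delta_+$, and $\phi\in L^1(\mathbb R^n)$ in our range (see below), the atomic series can be distributed through each $T^\delta_R$, and sublinearity of $T^\delta_*$ gives $T^\delta_*f(x)\le\sum_j|\lambda_j|\,T^\delta_*a_j(x)$ pointwise. Everything then rests on a uniform single-atom estimate and a summation lemma.

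\noindent First I would prove the uniform single-atom estimate $T^\delta_*a_j(x)\le C\,|x-x_j|^{-n/p}$ for all $x\ne x_j$, with $C$ independent of the atom. The kernel fact I would use, available exactly because $\delta=n/p-(n+1)/2$, is that the classical Bessel asymptotics for the Bochner--Riesz kernel give $|\partial^\gamma\phi(x)|\le C_\gamma(1+|x|)^{-n/p}$ for every multi-index $\gamma$ --- the decay does not degrade under differentiation because differentiating the oscillatory factor $e^{\pm i|x|}$ costs only $O(1)$; in particular $\phi\in L^1$ since $n/p>n$. By translation and dilation invariance one has $T^\delta_*a_j(x)=r_j^{-n/p}\bigl(T^\delta_*\widetilde a\bigr)\bigl((x-x_j)/r_j\bigr)$ for an atom $\widetilde a$ supported in the unit ball, so it suffices to show $T^\delta_*\widetilde a(v)\le C\min\{1,\,|v|^{-n/p}\}$. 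The bound $\le C$ is immediate from $\|T^\delta_R g\|_\infty\le\|\phi\|_1\|g\|_\infty$. For $|v|>2$ I would write $T^\delta_R\widetilde a(v)=R^n\int_{|y|\le1}\phi(R(v-y))\,\widetilde a(y)\,dy$, subtract the degree-$(m-1)$ Taylor polynomial of $y\mapsto\phi(R(v-y))$ at the origin (killed by the moments of $\widetilde a$ when $m\le N+1$), and use the derivative bounds on $\phi$ to get $|T^\delta_R\widetilde a(v)|\le C_m\,R^{\,n+m}(1+R|v|)^{-n/p}$ for $0\le m\le N+1$. The subtlety to watch is that using $m=N+1$ for \emph{all} $R$ fails, since $n+N+1>n/p$ makes this quantity unbounded as $R\to\infty$; instead one takes $m=N+1$ for $R\le1$ (where $R^{\,n+N+1}(1+R|v|)^{-n/p}\le(1+|v|)^{-n/p}$) and $m=0$ for $R\ge1$ (where $R^{\,n}(1+R|v|)^{-n/p}\le R^{\,n-n/p}|v|^{-n/p}\le|v|^{-n/p}$), so that $\sup_{R>0}|T^\delta_R\widetilde a(v)|\le C|v|^{-n/p}$.

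\noindent The main obstacle is then the real-variable inequality
\begin{equation*}
\Bigl\|\,\sum_j c_j\,|\,\cdot\,-x_j|^{-n/p}\,\Bigr\|_{L^{p,\infty}}^{\,p}\ \le\ C_{n,p}\,\sum_j c_j^{\,p}\qquad(c_j\ge0,\ 0<p<1),
\end{equation*}
which substitutes for the missing quasi-subadditivity of $L^{p,\infty}$. I would prove it by the layer-cake identity $c_j|x-x_j|^{-n/p}=\int_0^\infty\chi_{B(x_j,\,c_j^{p/n}t^{-p/n})}(x)\,dt$, so that $G(x):=\sum_j c_j|x-x_j|^{-n/p}=\int_0^\infty N_t(x)\,dt$ with $N_t(x)=\#\{\,j:\ |x-x_j|<c_j^{p/n}t^{-p/n}\,\}$ and $\int_{\mathbb R^n}N_t(x)\,dx=|B(0,1)|\,t^{-p}\sum_j c_j^{\,p}$. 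Fixing $\lambda>0$, split $G=G'+G''$ with $G'=\int_0^\lambda N_t\,dt$ and $G''=\int_\lambda^\infty N_t\,dt$: then $\|G'\|_{L^1}\le C\lambda^{\,1-p}\sum_j c_j^{\,p}$, so $|\{G'>\lambda/2\}|\le C\lambda^{-p}\sum_j c_j^{\,p}$ by Chebyshev, while $G''$ vanishes outside $\bigcup_j B(x_j,(c_j/\lambda)^{p/n})$, a set of measure $\le C\lambda^{-p}\sum_j c_j^{\,p}$. Since $\{G>\lambda\}\subseteq\{G'>\lambda/2\}\cup\{G''>\lambda/2\}$, adding the two bounds gives the inequality.

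\noindent Combining the two steps, for every $\lambda>0$,
\begin{equation*}
\lambda^p\,\bigl|\{x\in\mathbb R^n:\ T^\delta_*f(x)>\lambda\}\bigr|\ \le\ \lambda^p\,\Bigl|\Bigl\{\,x:\ C\sum_j|\lambda_j|\,|x-x_j|^{-n/p}>\lambda\,\Bigr\}\Bigr|\ \le\ C\sum_j|\lambda_j|^{\,p}\ \le\ C\,\|f\|_{H^p}^p,
\end{equation*}
which is the assertion. Incidentally the single-atom bound $T^\delta_*a_j(x)\lesssim|x-x_j|^{-n/p}$ is not $L^p$-integrable at $\delta=n/p-(n+1)/2$, which is both why $L^{p,\infty}$ is the correct target and why the estimate is sharp.
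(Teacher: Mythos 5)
Your proposal is correct in substance, but note that the paper itself offers no proof of this statement: Theorem II is quoted from Stein--Taibleson--Weiss \cite{stein}, and what you have reconstructed is essentially their classical argument. Your single-atom estimate $T^\delta_*a(x)\le C|x-x_0|^{-n/p}$ is, after rescaling, exactly the paper's Lemma 3.2 (your dichotomy $R\le1$ versus $R\ge1$ for the normalized atom is the paper's $\varepsilon>r$ versus $\varepsilon\le r$, and your kernel bound $|D^\gamma\phi(x)|\le C(1+|x|)^{-n/p}$ is its Lemma 3.1), and you correctly identify the real obstacle in the assembly step: for $0<p<1$ one cannot sum weak-$L^p$ bounds, so you invoke the Stein--Taibleson--Weiss summation lemma $\|\sum_j c_j|\cdot-x_j|^{-n/p}\|_{L^{p,\infty}}^p\le C\sum_j c_j^p$, and your layer-cake proof of it (splitting $\int_0^\lambda+\int_\lambda^\infty N_t\,dt$, $L^1$-Chebyshev for the first piece, support control for the second) is sound. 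By contrast, when the paper proves its own Theorems 1.1--1.2 on $WH^p_w$ it assembles differently: it uses the Quek--Yang weak atomic decomposition, splits $f=F_1+F_2$ at the level $2^{k_0}\sim\lambda$, handles $F_1$ by an $L^2_w$ estimate plus $T^\delta_*\le CM$, and handles $F_2$ by measure estimates on dilated cubes $\widetilde{Q^k_i}$ together with Lemmas 2.1--2.2 --- a scheme forced by the weak-type size condition $\sum_i w(Q^k_i)\le c2^{-kp}$ on the atoms, which buys the $WH^p_w\to WL^p_w$ result but is not needed for the classical $H^p\to L^{p,\infty}$ statement, where your route is the more economical one.

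One minor point deserves more care than your one-line justification: the pointwise domination $T^\delta_*f\le\sum_j|\lambda_j|T^\delta_*a_j$ requires $T^\delta_Rf=\sum_j\lambda_jT^\delta_Ra_j$ a.e., and ``$\phi\in L^1$'' alone does not give this, since the atomic series converges in $H^p$ and in $\mathscr S'$ but generally not in $L^1$. A clean fix is to note that $|\widehat{g}(\xi)|\le C|\xi|^{n(1/p-1)}\|g\|_{H^p}$, so convergence of the partial sums to $f$ in $H^p$ gives locally uniform convergence of their Fourier transforms; multiplying by the compactly supported bounded multiplier $(1-|\xi|^2/R^2)^\delta_+$ and inverting then yields uniform convergence of $T^\delta_R$ of the partial sums to $T^\delta_Rf$ (alternatively, prove the bound for finite atomic sums and pass to the limit). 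This is a standard gloss, also made silently in the paper's own proofs, but it should be recorded.
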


In 1995, Sato \cite{sato1} studied the weighted case and obtained the following weighted weak type estimate for the maximal Bochner-Riesz operator $T^\delta_*$.

\newtheorem*{thmiii}{Theorem III}
\begin{thmiii}
Let $w\in A_1$$($Muckenhoupt weight class$)$, $0<p<1$ and $\delta=n/p-(n+1)/2$. Then there exists a constant $C>0$ independent of $f$ such that
\begin{equation*}
\sup_{\lambda>0}\lambda^p\cdot w\big(\big\{x\in\mathbb R^n:T^\delta_*f(x)>\lambda\big\}\big)\le C\|f\|_{H^p_w}^p.
\end{equation*}
\end{thmiii}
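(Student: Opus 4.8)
The plan is to reduce everything, via the (classical) atomic decomposition of $H^p_w(\mathbb R^n)$, to a \emph{uniform} weak‑type estimate on atoms, and then to sum the atomic estimates by means of a subadditivity property of $\|\cdot\|_{WL^p_w}$ that is available precisely because $p<1$. Recall that for $w\in A_1$ and $0<p<1$ every $f\in H^p_w$ can be written $f=\sum_j\lambda_j a_j$ (convergence in $\mathcal S'$ and in $H^p_w$) with $\sum_j|\lambda_j|^p\le C\|f\|_{H^p_w}^p$, where each $a_j$ is a $(p,\infty)$‑atom: $\operatorname{supp}a_j\subseteq B_j=B(x_j,r_j)$, $\|a_j\|_{\infty}\le w(B_j)^{-1/p}$, and $\int a_j(x)\,x^{\alpha}\,dx=0$ for all $|\alpha|\le s:=\lfloor n(1/p-1)\rfloor$. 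Since $T^\delta_*$ is sublinear and each $T^\delta_R$ is convolution with an $L^1$ kernel, one has $T^\delta_* f\le\sum_j|\lambda_j|\,T^\delta_* a_j$ pointwise a.e. Hence it suffices to prove \textbf{(i)} that $\sup_{\lambda>0}\lambda^p\,w(\{x:T^\delta_* a(x)>\lambda\})\le C$ for every atom $a$, with $C$ independent of $a$, and then to invoke \textbf{(ii)} the elementary fact that for $0<p<1$, if $w(\{|h_j|>t\})\le\sigma_j t^{-p}$ for all $t>0$ and all $j$, then $\big\|\sum_j h_j\big\|_{WL^p_w}^p\le C_p\sum_j\sigma_j$ with $C_p$ independent of the number of terms. (Fix $t$, split $\sum_j h_j=\sum_j h_j\mathbf 1_{\{|h_j|\le t\}}+\sum_j h_j\mathbf 1_{\{|h_j|>t\}}$; bound the level set of the second sum by $\bigcup_j\{|h_j|>t\}$, and that of the first by Chebyshev in $L^r_w$ for a fixed $p<r<1$, using the $r$‑subadditivity of $\int|\cdot|^r$ together with the consequence $\int_{\{|h_j|\le t\}}|h_j|^r\,w\le C\sigma_j t^{r-p}$.) Applying this with $h_j=|\lambda_j|\,T^\delta_* a_j$ and $\sigma_j=C|\lambda_j|^p$, which is legitimate by (i), yields $\|T^\delta_* f\|_{WL^p_w}^p\le C_p\sum_j|\lambda_j|^p\le C\|f\|_{H^p_w}^p$, as required. (Equivalently, one may carry out the usual Calder\'on--Zygmund splitting of $\sum_j\lambda_j a_j$ into the atoms with $|\lambda_j|$ large, resp.\ small, relative to $\lambda$; this is just an unwinding of the same fact.)

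The substance of the argument is \textbf{(i)}. Fix an atom $a$ on $B=B(x_0,r)$ and split $\mathbb R^n=2B\cup(2B)^c$. First note $\delta=n/p-(n+1)/2>(n-1)/2$ because $p<1$; hence the Bochner--Riesz kernel obeys $|K^\delta(y)|\le C(1+|y|)^{-(n+1)/2-\delta}\le C(1+|y|)^{-n-\varepsilon}$ with $\varepsilon>0$, so $T^\delta_* f\le C\,Mf$ pointwise, and $M$ is bounded on $L^2_w$ since $w\in A_1\subset A_2$. On $2B$ this gives $w(\{x\in 2B:T^\delta_* a(x)>\lambda\})\le\min\big\{w(2B),\,\lambda^{-2}\|T^\delta_* a\|_{L^2_w}^2\big\}\le C\min\big\{w(B),\,\lambda^{-2}w(B)^{1-2/p}\big\}\le C\lambda^{-p}$, the last step being checked separately for $\lambda\le w(B)^{-1/p}$ and for $\lambda>w(B)^{-1/p}$. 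On $(2B)^c$ the point is the pointwise tail bound $T^\delta_* a(x)\le C\,w(B)^{-1/p}\big(r/|x-x_0|\big)^{n/p}$. By the dilation invariance $T^\delta_*(f(\cdot/r))=(T^\delta_* f)(\cdot/r)$ this reduces to the model ball $B(0,1)$: write $T^\delta_R a(x)=\int\big(K^\delta_R(x-y)-P_s(y)\big)a(y)\,dy$ with $P_s$ the degree‑$s$ Taylor polynomial of $y\mapsto K^\delta_R(x-y)$ at $y=0$, and use $|\nabla^m K^\delta(y)|\le C_m(1+|y|)^{-(n+1)/2-\delta}$ for every $m$ --- the kernel oscillates, so differentiation does not improve its decay. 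Estimating the integrand in the three regimes $R\le1/|x|$, $1/|x|<R\le1$, $R>1$ and taking the supremum over $R$ of the resulting bounds (essentially $\|a\|_\infty R^{\,n+s+1}$, $\|a\|_\infty R^{\,n+s+1-n/p}|x|^{-n/p}$, and $\|a\|_\infty R^{\,n-n/p}|x|^{-n/p}$) gives $T^\delta_* a(x)\le C\|a\|_\infty\big(|x|^{-(n+s+1)}+|x|^{-n/p}\big)\le C\|a\|_\infty|x|^{-n/p}$ for $|x|\ge2$, since $n+s+1>n/p$. Undoing the scaling yields the tail bound, whence $w(\{x\in(2B)^c:T^\delta_* a(x)>\lambda\})\le w\big(B(x_0,\,Cr\,\lambda^{-p/n}w(B)^{-1/n})\big)$, which is empty when that radius is $<2r$ and otherwise $\le C\big(\lambda^{-p/n}w(B)^{-1/n}\big)^n w(B)=C\lambda^{-p}$ by the $A_1$ growth estimate $w(B(x_0,\rho))\le C(\rho/r)^n w(B)$ for $\rho\ge r$. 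Combining the two regions proves (i).

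I expect the off‑diagonal estimate on $(2B)^c$ to be the main obstacle: it requires keeping precise track of how $r$, $w(B)$ and the dilation parameter $R$ enter, exploiting the \emph{borderline} number $s=\lfloor n(1/p-1)\rfloor$ of vanishing moments, and performing the optimization over $R$ so as to land on the decay $|x-x_0|^{-n/p}$ \emph{exactly} --- no faster, no slower. This exponent is forced by the critical relation $(n+1)/2+\delta=n/p$; since $|x-x_0|^{-n/p}\in WL^p_w$ but $\notin L^p_w$, the conclusion can only be of weak type, and since $\|\cdot\|_{WL^1_w}$ is genuinely not subadditive the endpoint $p=1$ must be excluded (this is why the hypothesis is $0<p<1$). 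The remaining pieces --- the atomic decomposition of $H^p_w$, the domination $T^\delta_*\le CM$ together with the weighted $L^2$ theory of $M$, the $p$‑subadditivity of $\|\cdot\|_{WL^p_w}$, and the convergence bookkeeping needed to justify $T^\delta_* f\le\sum_j|\lambda_j|\,T^\delta_* a_j$ --- are either classical or routine.
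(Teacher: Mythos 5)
Your argument is, as far as I can verify, correct; but note that the paper never proves Theorem III at all --- it is quoted from Sato \cite{sato1} --- so the only comparison available is with the machinery the paper develops for its own Theorems 1.1--1.2. There the overlap is substantial: your local estimate (the domination $T^\delta_*\le C\,M$ for $\delta>(n-1)/2$ together with the $L^2_w$-boundedness of $M$ for $w\in A_1\subset A_2$) is exactly how the paper handles the corresponding piece, and your off-diagonal tail bound $T^\delta_*a(x)\le C\|a\|_{L^\infty}\,r^{n/p}|x-x_0|^{-n/p}$ is precisely the paper's Lemma 3.2, obtained the same way (the critical-index derivative bounds of Lemma 3.1, a Taylor expansion of order $s+1$ against the $[n(1/p-1)]$ vanishing moments, and the observation $n+s+1>n/p$); your three-regime optimization in $R$ is just a finer version of the paper's two cases $\varepsilon\le r$ and $\varepsilon>r$, and your scaling reduction and the $A_1$ growth estimate $w(B(x_0,\rho))\le C(\rho/r)^nw(B)$ reproduce its use of Lemmas 2.1--2.2. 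Where you genuinely diverge is in how the atomic estimates are summed: you start from the classical $(p,\infty)$-atomic decomposition of $H^p_w$ and invoke the Stein--Taibleson--Weiss $p$-subadditivity of $\|\cdot\|_{WL^p_w}$ for $0<p<1$ (your step (ii); the truncation proof you sketch is correct, with constant blowing up as $p\to1$), whereas the paper's related proofs use the weak-Hardy atomic decomposition and split the series at the level $2^{k_0}\sim\lambda$, treating the low levels in $L^2_w$ and the high levels by measure and tail estimates. Your route is shorter for the statement at hand but is intrinsically limited to $p<1$, consistent with Theorem III's hypothesis; the level-splitting route is what lets the paper reach $p=1$ (and weak $H^p_w$ data) in its own theorems. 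The only soft spot is the pointwise domination $T^\delta_*f\le\sum_j|\lambda_j|\,T^\delta_*a_j$, which requires the atomic series to converge to $f$ in a sense compatible with applying $T^\delta_R$; you flag this as routine bookkeeping, and it is treated at the same level of rigor in the paper and in \cite{lee}, so I would not count it as a gap. (Your parenthetical claim that $|x-x_0|^{-n/p}\notin L^p_w$ is not literally true for every $A_1$ weight, but it is only motivation and plays no role in the proof.)
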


In 2006, Lee \cite{lee} considered values of $\delta$ greater than the critical index $n/p-(n+1)/2$ and proved the following weighted strong type estimate.

\newtheorem*{thmiv}{Theorem IV}
\begin{thmiv}
Let $w\in A_1$, $0<p\le1$ and $\delta>n/p-(n+1)/2$. Then there exists a constant $C>0$ independent of $f$ such that
\begin{equation*}
\big\|T^\delta_*(f)\big\|_{L^p_w}\le C\|f\|_{H^p_w}.
\end{equation*}
\end{thmiv}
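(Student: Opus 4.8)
My plan is to prove Theorem IV by the classical atomic method for weighted Hardy spaces. First I would use the atomic decomposition of $H^p_w(\mathbb R^n)$: every $f\in H^p_w$ can be written $f=\sum_j\lambda_j a_j$ (convergence in $\mathcal S'$ and in $H^p_w$), where each $a_j$ is a weighted $(p,q)$-atom supported in a ball $B_j=B(x_j,r_j)$, with $\|a_j\|_{L^q_w}\le w(B_j)^{1/q-1/p}$ and $\int a_j(x)\,x^\alpha\,dx=0$ for all $|\alpha|\le s$, where $s=\lfloor n(1/p-1)\rfloor$ is the number of vanishing moments forced by $w\in A_1$ (whose critical index equals $1$), and $\sum_j|\lambda_j|^p\le C\|f\|_{H^p_w}^p$; here $q$ is any fixed exponent with $1<q<\infty$. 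Since $0<p\le1$ the hypothesis gives $\delta>n/p-(n+1)/2\ge(n-1)/2$, so the Bochner--Riesz kernel $\phi$ (with $\widehat\phi(\xi)=(1-|\xi|^2)^\delta_+$) has an integrable radial decreasing majorant, whence $T^\delta_*g(x)\le CMg(x)$ pointwise, $M$ being the Hardy--Littlewood maximal operator; in particular $T^\delta_*$ is bounded on $L^q_w$ for $w\in A_1\subset A_q$. By a standard density argument and the $p$-subadditivity of $\|\cdot\|^p_{L^p_w}$, it then suffices to prove the uniform bound $\|T^\delta_*a\|^p_{L^p_w}\le C$ for every such atom $a$.

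So fix an atom $a$ supported in $B=B(x_0,r)$ and split $\mathbb R^n=4B\cup(4B)^c$. For the local part, H\"older's inequality with exponent $q/p>1$ together with the $L^q_w$-boundedness of $T^\delta_*$, the atom normalization, and the doubling property of $w$ give $\int_{4B}(T^\delta_*a)^pw\le\|T^\delta_*a\|^p_{L^q_w}\,w(4B)^{1-p/q}\le C\|a\|^p_{L^q_w}\,w(4B)^{1-p/q}\le Cw(B)^{p/q-1}w(B)^{1-p/q}=C$.

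The heart of the proof is the far part $\int_{(4B)^c}(T^\delta_*a)^pw$, for which I need pointwise decay of $T^\delta_*a(x)$ for $|x-x_0|>4r$. Here I would use the classical estimates $|\partial^\alpha\phi(u)|\le C_\alpha(1+|u|)^{-(n+1)/2-\delta}$ for every multi-index $\alpha$ (obtained from the Bessel-function formula for $\phi$ via the recursion $\tfrac{d}{dt}(t^{-\nu}J_\nu(t))=-t^{-\nu}J_{\nu+1}(t)$ and $|J_\nu(t)|\lesssim t^{-1/2}$), together with $\phi_{1/R}(z)=R^n\phi(Rz)$ and the weighted bound $\|a\|_{L^1}\le C|B|\,w(B)^{-1/q}\|a\|_{L^q_w}\le Cr^n w(B)^{-1/p}$ coming from H\"older and the $A_q$ condition. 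For each fixed $R>0$ I would estimate $T^\delta_Ra(x)=(a*\phi_{1/R})(x)$ two ways: (i) using the $s$ vanishing moments, subtract the degree-$s$ Taylor polynomial of $\phi_{1/R}(x-\cdot)$ about $x-x_0$, so that Taylor's theorem yields $|T^\delta_Ra(x)|\le Cr^{s+1}\|a\|_{L^1}R^{n+s+1}(1+R|x-x_0|)^{-(n+1)/2-\delta}$; (ii) trivially, $|T^\delta_Ra(x)|\le C\|a\|_{L^1}R^{n}(1+R|x-x_0|)^{-(n+1)/2-\delta}$. Taking the minimum of (i) and (ii) and then the supremum over $R>0$ --- the balance between the two forces the relevant scale to be $R\sim1/r$ --- gives, for $|x-x_0|>4r$, the bound $T^\delta_*a(x)\le Cw(B)^{-1/p}\big(r/|x-x_0|\big)^{\gamma}$ with $\gamma=\min\{(n+1)/2+\delta,\;n+1+s\}$. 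Splitting $(4B)^c$ into dyadic annuli $2^kr\le|x-x_0|<2^{k+1}r$ ($k\ge2$) and using $w(2^kB)\le C2^{kn}w(B)$ (valid for $w\in A_1$), I get $\int_{(4B)^c}(T^\delta_*a)^pw\le Cw(B)^{-1}\sum_{k\ge2}2^{-kp\gamma}\,w(2^{k+1}B)\le C\sum_{k\ge2}2^{k(n-p\gamma)}$, which converges precisely because $p\gamma>n$: the inequality $p((n+1)/2+\delta)>n$ is exactly $\delta>n/p-(n+1)/2$, and $p(n+1+s)>n$ holds since $s=\lfloor n(1/p-1)\rfloor$ satisfies $n+1+s>n/p$. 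Together with the local estimate this yields $\|T^\delta_*a\|^p_{L^p_w}\le C$ for every atom, completing the reduction.

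The main obstacle I expect is the far-part optimization over $R$: because $T^\delta_*$ is a supremum over all dilations, the cancellation estimate (i) alone is useless (it grows as $R\to\infty$), and one must play it off against the size estimate (ii), verifying that the crossover sits at the atom's natural scale $R\sim1/r$ and bookkeeping the powers of $r$ so that the decay exponent $\gamma$ in $|x-x_0|$ comes out large enough --- it is here that the condition $\delta>n/p-(n+1)/2$ is used, and used sharply. A secondary, more technical point is the derivation of the uniform kernel-derivative bounds $|\partial^\alpha\phi(u)|\le C_\alpha(1+|u|)^{-(n+1)/2-\delta}$, and the routine verification that a uniform bound on atoms transfers to all of $H^p_w$, which is legitimate here because $T^\delta_*$ is bounded on $L^q_w$.
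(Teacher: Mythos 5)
The statement you were asked about is Theorem IV, which this paper does not prove: it is quoted from Lee's article (reference [7]), and the only trace of its proof inside the paper is the machinery of Section 3 (Lemma 3.1 on the kernel decay and Lemma 3.2 on the far-field bound for an $L^\infty$-normalized atom, which the author says is ``essentially contained'' in Lee). Measured against that machinery, your proposal is correct and is in substance the same argument, packaged differently. You use the classical weighted $(p,q,s)$-atomic decomposition of $H^p_w$ with atoms normalized by $\|a\|_{L^q_w}\le w(B)^{1/q-1/p}$, prove the local part by H\"older plus the $L^q_w$-boundedness of $T^\delta_*$ (via $T^\delta_*\le CM$, valid since $\delta>(n-1)/2$), and for the far part you balance the size estimate against the moment/Taylor estimate, with the crossover at $R\sim 1/r$ producing the decay exponent $\gamma=\min\{(n+1)/2+\delta,\,n+1+s\}$ and the hypothesis $\delta>n/p-(n+1)/2$ entering exactly as $p\gamma>n$; your exponent bookkeeping, the bound $\|a\|_{L^1}\le Cr^nw(B)^{-1/p}$, and the annulus summation with $w(2^kB)\le C2^{kn}w(B)$ all check out. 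The paper (following Lee/Sato) instead works at a fictitious critical index, choosing $p_1<p$ with $\delta=n/p_1-(n+1)/2$, uses $L^\infty$-normalized atoms, and gets the single decay rate $|x-x_0|^{-n/p_1}=|x-x_0|^{-((n+1)/2+\delta)}$ from the two cases $\varepsilon\le r$ and $\varepsilon>r$ of Lemma 3.2 --- which is precisely your min-of-two-estimates optimization in different clothing; your version is slightly more flexible since it records the $n+1+s$ decay as well. The one place where your write-up is thinner than it should be is the reduction ``uniform bound on atoms $\Rightarrow$ boundedness on $H^p_w$'': for a sublinear maximal operator this is not automatic from the decomposition converging in $\mathscr S'$ alone (uniform atom bounds do not in general self-improve), and the honest route is the one you gesture at --- for $f$ in the dense class $H^p_w\cap L^q_w$ take a decomposition converging also in $L^q_w$, use the $L^q_w$-boundedness of $T^\delta_*$ to get $T^\delta_*f\le\sum_j|\lambda_j|T^\delta_*a_j$ a.e., and then pass to general $f$ by density and Fatou; spelling this out would close the only real gap in the proposal.
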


Furthermore, by using the above $H^p_w$--$L^p_w$ boundedness of the maximal Bochner-Riesz operator $T^\delta_*$, Lee \cite{lee} also obtained the $H^p_w$ boundedness of the Bochner-Riesz operator.

\newtheorem*{thmv}{Theorem V}
\begin{thmv}
Let $w\in A_1$ with critical index $r_w$ for the reverse H\"older condition, $0<p\le1$, $\delta>\max\{n/p-(n+1)/2,[n/p]r_w/(r_w-1)-(n+1)/2\}$. Then there exists a constant $C>0$ independent of $f$ and $R$ such that
\begin{equation*}
\big\|T^\delta_R(f)\big\|_{H^p_w}\le C\|f\|_{H^p_w}.
\end{equation*}
\end{thmv}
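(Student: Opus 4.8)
\medskip

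The plan is to combine the atomic decomposition of $H^p_w(\mathbb{R}^n)$ with the $H^p_w$--$L^p_w$ boundedness of $T^\delta_*$ supplied by Theorem IV. For $w\in A_1$ one has $q_w=1$, so a $(p,\infty)$-atom $a$ is supported in a ball $B=B(x_0,r)$, satisfies $\|a\|_\infty\le w(B)^{-1/p}$, and has vanishing moments $\int x^\beta a=0$ for $|\beta|\le s:=[n/p]-n$; moreover every such atom obeys $\|a\|_{H^p_w}\le C$. Since $\|\cdot\|_{H^p_w}^p$ is $p$-subadditive and $f=\sum_k\lambda_k a_k$ with $\sum_k|\lambda_k|^p\approx\|f\|_{H^p_w}^p$, it suffices to prove
\[
\big\|T^\delta_R a\big\|_{H^p_w}\le C
\]
uniformly in $R>0$ and over all such atoms $a$. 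I would measure the left side by the radial maximal function with respect to a fixed $\varphi\in\mathcal{S}(\mathbb{R}^n)$ with $\widehat\varphi$ radial, $\widehat\varphi\equiv1$ on $\{|\xi|\le1\}$ and $\operatorname{supp}\widehat\varphi\subseteq\{|\xi|\le2\}$; then $\|g\|_{H^p_w}\approx\big\|\sup_{t>0}|\varphi_t*g|\big\|_{L^p_w}$ for $w\in A_\infty$, so the goal becomes a bound for $\big\|\sup_{t>0}|\varphi_t*T^\delta_R a|\big\|_{L^p_w}$.

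The decisive point is a dichotomy in $t$ against the natural scale $1/R$ of the kernel $K^\delta_R(x)=R^nK^\delta(Rx)$ of $T^\delta_R$, where $|\partial^\gamma K^\delta(x)|\lesssim_\gamma(1+|x|)^{-(n+1)/2-\delta}$. If $t\le 1/R$, then $\widehat\varphi(t\xi)\equiv1$ on $\{|\xi|\le R\}\supseteq\operatorname{supp}\widehat{T^\delta_R a}$, whence $\varphi_t*T^\delta_R a=T^\delta_R a$ identically, so that $\sup_{0<t\le1/R}|\varphi_t*T^\delta_R a(x)|=|T^\delta_R a(x)|\le T^\delta_*a(x)$; Theorem IV and the uniform atomic bound then give $\|T^\delta_*a\|_{L^p_w}\le C\|a\|_{H^p_w}\le C$, and this is where the hypothesis enters for this range. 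If $t>1/R$, write $\varphi_t*T^\delta_R a=a*\psi^{R,t}$ with $\psi^{R,t}:=K^\delta_R*\varphi_t$; one checks that, uniformly in $R$ and in $t\ge1/R$, $\|\psi^{R,t}\|_1\lesssim1$ and $|\partial^\gamma\psi^{R,t}(z)|\lesssim_\gamma t^{-n-|\gamma|}(1+|z|/t)^{-(n+1)/2-\delta}$ for $0\le|\gamma|\le s+1$. Taylor expansion of order $s$ together with the moments of $a$ then yields, for $|x-x_0|\ge2r$,
\[
\big|(\varphi_t*T^\delta_R a)(x)\big|=\big|(a*\psi^{R,t})(x)\big|\lesssim r^{s+1}\|a\|_1\,t^{-n-s-1}\big(1+|x-x_0|/t\big)^{-(n+1)/2-\delta},
\]
while for $|x-x_0|<2r$ one merely uses $|(a*\psi^{R,t})(x)|\le\|a\|_\infty\|\psi^{R,t}\|_1\lesssim\|a\|_\infty$.

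It then remains to take the supremum over $t>1/R$ of the previous display (the map $t\mapsto t^{-n-s-1}(1+|x-x_0|/t)^{-(n+1)/2-\delta}$ being essentially unimodal with peak near $t\approx\max(|x-x_0|,1/R)$) and to integrate the $p$-th power of the resulting pointwise bound against $w$. Over $2B$ one gets $\lesssim w(2B)\,\|a\|_\infty^p\lesssim w(2B)\,w(B)^{-1}\lesssim1$ by doubling. Over $(2B)^c$ one splits into dyadic annuli $2^{j+1}B\setminus2^jB$ and estimates $\sum_j w(2^{j+1}B)\,(2^jr)^{-\theta p}$ using $\|a\|_1\lesssim w(B)^{-1/p}r^n$, with the $R$-dependent factors absorbed by treating $Rr\le1$ and $Rr>1$ separately; the geometric series converges once $\delta>n/p-(n+1)/2$, and the remaining control of $w(2^{j+1}B)$ — for which one leaves the crude doubling bound for the reverse Hölder inequality $\big(\tfrac1{|Q|}\int_Q w^{r_w}\big)^{1/r_w}\lesssim\tfrac1{|Q|}\int_Q w$ — balanced against the gain from the $s=[n/p]-n$ vanishing moments is precisely what makes the second hypothesis $\delta>[n/p]\,r_w/(r_w-1)-(n+1)/2$ the relevant threshold. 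I expect the technical heart of the argument to lie here: first, the uniform-in-$(R,t)$ size and derivative estimates for $\psi^{R,t}=K^\delta_R*\varphi_t$, which require handling both the smooth near-origin behaviour and the oscillatory $|x|^{-(n+1)/2-\delta}$ tail of the Bessel-type kernel $K^\delta$; and second, the careful matching of kernel decay, moment count, and the reverse-Hölder exponent $r_w$ in the annular sums so that every series converges. The reduction to atoms and the use of Theorem IV for $t\le1/R$ are routine by comparison.
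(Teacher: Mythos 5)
This statement (Theorem V) is not proved in the paper at all: it is Lee's result, quoted from \cite{lee}, and the closest argument actually carried out in the paper is the proof of Theorem 1.2, which runs differently from your plan. There the key trick is that $T^\delta_R b$ \emph{inherits} the vanishing moments of the atom (identity (3), proved by Leibniz's rule on the Fourier side), after which the grand maximal function of $T^\delta_R b$ is estimated pointwise by Taylor expansion combined with the decay of $T^\delta_* b$ from Lemma 3.2; no dichotomy in $t$ against the scale $1/R$ is needed. Your observation that $\varphi_t * T^\delta_R a = T^\delta_R a$ for $t\le 1/R$, so that this range reduces to Theorem IV, is correct and attractive, but it handles only the easy half of the problem.

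The genuine gap is that the step where the second hypothesis $\delta>[n/p]\,r_w/(r_w-1)-(n+1)/2$ must enter is never carried out: you only assert that the reverse H\"older inequality ``balanced against'' the moment gain yields ``precisely'' this threshold. Since that threshold is the entire content of Theorem V beyond Theorem IV (which you already invoke), deferring it means the theorem is not proved. Concretely: (i) the uniform-in-$(R,t)$ bounds for $\psi^{R,t}=K^\delta_R*\varphi_t$ are only asserted; in the range $1/R\le t\le 2/R$ the singularity of $(1-|\xi|^2/R^2)^\delta_+$ at $|\xi|=R$ lies inside the support of $\widehat\varphi(t\cdot)$, so smoothness of the multiplier gives nothing and one must pass through kernel estimates of the type in Lemma 3.1 — fixable, but not ``one checks''. (ii) More seriously, the supremum over $t>1/R$ of your displayed bound is not uniform in $R$ when $\delta+(n+1)/2<n+s+1$ (with $s=[n/p]-n$): the map $t\mapsto t^{-n-s-1}(1+|x-x_0|/t)^{-(n+1)/2-\delta}$ is then largest as $t\downarrow 1/R$ and leaves a factor $R^{\,n+s+1-\delta-(n+1)/2}$ that blows up as $R\to\infty$; eliminating it requires exactly the interplay of moment count, kernel decay, the split $Rr\lessgtr 1$, and the $r_w$-dependent weighted estimates that you postpone, and your sketch gives no indication that doubling plus the reverse H\"older inequality, used as you describe on $w(2^{j+1}B)$, actually produces the exponent $[n/p]\,r_w/(r_w-1)$. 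Relatedly, your claim that the annular series ``converges once $\delta>n/p-(n+1)/2$'' cannot be right as stated, for otherwise the second hypothesis of Theorem V would be superfluous. A cleaner route to completion is the one in the paper: first transfer the atom's moments to $T^\delta_R a$ via identity (3), then estimate the maximal function of $T^\delta_R a$ directly from the pointwise decay of $T^\delta_* a$, which is where the quantitative hypotheses on $\delta$ (and, in Lee's setting, on $r_w$) are actually consumed.
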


The aim of this paper is to investigate some corresponding estimates of (maximal) Bochner-Riesz operators on the weighted weak Hardy spaces $WH^p_w(\mathbb R^n)$ (see Section 2 for the definition).
Our main results are formulated as follows.

\newtheorem{theorem}{Theorem}[section]

\begin{theorem}
Let $0<p\le1$, $\delta>n/p-(n+1)/2$ and $w\in A_1$. Then there exists a constant $C>0$ independent of $f$ such that
\begin{equation*}
\big\|T^\delta_*(f)\big\|_{WL^p_w}\le C\|f\|_{WH^p_w}.
\end{equation*}
\end{theorem}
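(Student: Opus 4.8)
The plan is to argue through the atomic decomposition of $WH^p_w(\mathbb R^n)$ recalled in Section 2: write $f=\sum_{k\in\mathbb Z}\sum_i\lambda^k_ia^k_i$ (convergence in the sense of distributions), where for each fixed $k$ the cubes $\{Q^k_i\}_i$ carrying the $(p,\infty,s)$-atoms $a^k_i$ have bounded overlap uniformly in $k$, $\lambda^k_i\approx 2^k w(Q^k_i)^{1/p}$, and $\sup_k 2^k\big(\sum_i w(Q^k_i)\big)^{1/p}\le C\|f\|_{WH^p_w}$. Here $s$ is a nonnegative integer chosen so that $\delta>s+(n-1)/2$ and $(n+s+1)p>n$; both can be arranged simultaneously precisely because $\delta>n/p-(n+1)/2$ (e.g.\ $s=\lfloor n(1/p-1)\rfloor$ works). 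Since $\|g\|_{WL^p_w}^p=\sup_{\lambda>0}\lambda^p w(\{|g|>\lambda\})$, it suffices to fix $\lambda>0$, choose $k_0\in\mathbb Z$ with $2^{k_0}\le\lambda<2^{k_0+1}$, split $f=F_1+F_2$ with $F_1=\sum_{k\le k_0}\sum_i\lambda^k_ia^k_i$ and $F_2=\sum_{k>k_0}\sum_i\lambda^k_ia^k_i$, and—using the sublinearity of $T^\delta_*$—bound each of $w(\{T^\delta_*F_1>\lambda/2\})$ and $w(\{T^\delta_*F_2>\lambda/2\})$ by $C\lambda^{-p}\|f\|_{WH^p_w}^p$.

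For the ``small coefficient'' part $F_1$ I would use an $L^2_w$ estimate. Since $\delta>n/p-(n+1)/2\ge(n-1)/2$ for $0<p\le1$, the kernel of $T^\delta_R$ has a radial, decreasing $L^1$ majorant, hence $T^\delta_*g\le CMg$ pointwise and $T^\delta_*$ is bounded on $L^2_w$ for $w\in A_1\subset A_2$. Using the bounded overlap of the $Q^k_i$, together with $\|a^k_i\|_{L^2_w}^2\le\|a^k_i\|_\infty^2 w(Q^k_i)\le w(Q^k_i)^{1-2/p}$ and $\lambda^k_i\approx 2^k w(Q^k_i)^{1/p}$, one gets $\big\|\sum_i\lambda^k_ia^k_i\big\|_{L^2_w}^2\le C2^{2k}\sum_i w(Q^k_i)\le C2^{k(2-p)}\|f\|_{WH^p_w}^p$; summing the square roots over $k\le k_0$ (a convergent geometric series, as $2-p>0$) gives $\|F_1\|_{L^2_w}\le C2^{k_0(2-p)/2}\|f\|_{WH^p_w}^{p/2}\le C\lambda^{(2-p)/2}\|f\|_{WH^p_w}^{p/2}$. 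Chebyshev's inequality and the $L^2_w$-boundedness then yield $w(\{T^\delta_*F_1>\lambda/2\})\le C\lambda^{-2}\|F_1\|_{L^2_w}^2\le C\lambda^{-p}\|f\|_{WH^p_w}^p$.

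For the ``large coefficient'' part $F_2$ I would first discard the exceptional set $G=\bigcup_{k>k_0}\bigcup_i Q^{k,*}_i$ with $Q^{k,*}_i=2\sqrt{n}\,Q^k_i$; by the doubling of $w$ and the decomposition bound, $w(G)\le C\sum_{k>k_0}\sum_i w(Q^k_i)\le C\sum_{k>k_0}2^{-kp}\|f\|_{WH^p_w}^p\le C\lambda^{-p}\|f\|_{WH^p_w}^p$. Off $G$, the key input is the pointwise estimate for $T^\delta_*$ on atoms underlying the $H^p_w$ theory (cf.\ \cite{lee,stein}): for $x\notin Q^{k,*}_i$, $T^\delta_*a^k_i(x)\le C\|a^k_i\|_\infty\big(\ell(Q^k_i)/|x-x^k_i|\big)^{n+s+1}$, valid since $\delta>s+(n-1)/2$. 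As $\lambda^k_i\|a^k_i\|_\infty\le C2^k$, on $G^c$ this gives $T^\delta_*F_2(x)\le C\Psi(x)$, where $\Psi(x):=\sum_{k>k_0}\sum_i 2^k\big(\ell(Q^k_i)/|x-x^k_i|\big)^{n+s+1}$. Now pick any exponent $p_0$ with $n/(n+s+1)<p_0<p$ (possible by the choice of $s$); using $p_0\le1$ to bring the sum inside the $p_0$-th power, splitting $(Q^{k,*}_i)^c$ into dyadic annuli, and invoking $w(2^mQ^{k,*}_i)\le C2^{mn}w(Q^k_i)$ (an $A_1$ property), one gets $\int_{(Q^{k,*}_i)^c}\big(\ell(Q^k_i)/|x-x^k_i|\big)^{(n+s+1)p_0}w\,dx\le Cw(Q^k_i)$ because $(n+s+1)p_0>n$, hence
\[
\int_{G^c}\Psi(x)^{p_0}w(x)\,dx\le C\sum_{k>k_0}2^{kp_0}\sum_i w(Q^k_i)\le C\|f\|_{WH^p_w}^p\sum_{k>k_0}2^{k(p_0-p)}\le C\lambda^{p_0-p}\|f\|_{WH^p_w}^p,
\]
the last series converging since $p_0<p$. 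Chebyshev's inequality at exponent $p_0$ then gives $w(\{x\in G^c:T^\delta_*F_2(x)>\lambda/2\})\le C\lambda^{-p_0}\int_{G^c}\Psi^{p_0}w\le C\lambda^{-p}\|f\|_{WH^p_w}^p$, which together with the bound on $w(G)$ completes the estimate for $F_2$.

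Adding the two contributions yields $\lambda^p w(\{T^\delta_*f>\lambda\})\le C\|f\|_{WH^p_w}^p$ uniformly in $\lambda$, i.e.\ $\|T^\delta_*f\|_{WL^p_w}\le C\|f\|_{WH^p_w}$. The step I expect to be the main obstacle is the treatment of $F_2$, which is exactly where the weak Hardy setting departs from the classical one: because the atomic decomposition of $WH^p_w$ controls only $\sup_k 2^k(\sum_i w(Q^k_i))^{1/p}$ and not the full sum over $k$, applying Chebyshev at exponent $1$ (or at exponent $p$) to each dyadic layer produces a divergent sum $\sum_{k>k_0}$ of comparably sized terms. The device that rescues the argument is to run Chebyshev at an exponent $p_0$ \emph{strictly} between $n/(n+s+1)$ and $p$: the lower bound keeps the annular integrals convergent, while $p_0<p$ turns the layer sum into a convergent geometric series. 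It is precisely the existence of such a $p_0$ — equivalently, of the moment order $s$ — that forces the hypothesis $\delta>n/p-(n+1)/2$; the remaining technical point is to establish the pointwise atom estimate for $T^\delta_*$ with decay exponent $n+s+1$, which is handled as in the $H^p_w$ theory.
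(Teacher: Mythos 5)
Your proposal is correct in substance and follows the same skeleton as the paper's proof: atomic decomposition from Theorem 2.3, the split $f=F_1+F_2$ at the level $k_0$ with $2^{k_0}\le\lambda<2^{k_0+1}$, an $L^2_w$ estimate plus Chebyshev for $F_1$ (identical to the paper's, up to using bounded overlap instead of Minkowski's inequality), and an exceptional set plus a pointwise decay estimate for $T^\delta_*$ on atoms for $F_2$. Where you genuinely diverge is in the mechanism that makes the layer sum over $k>k_0$ converge off the exceptional set. The paper dilates the cubes by the \emph{growing} factors $\tau^{(k-k_0)p/n}$ (with $1<\tau<2$), keeps Chebyshev at the exponent $p$ itself, and extracts the convergent factor $\big(\tau^{(k-k_0)p}\big)^{1-p/p_1}$ from the weighted tail integral (Lemma 2.2); the factor $\tau<2$ is what keeps the measure of the enlarged exceptional set summable. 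You instead keep a fixed dilation $2\sqrt{n}\,Q^k_i$ and run Chebyshev at an auxiliary exponent $p_0$ strictly between the reciprocal of the decay rate and $p$, so the geometric factor $2^{k(p_0-p)}$ does the summing. Both devices are legitimate and of comparable difficulty; yours avoids the slightly fussy $\tau$-dependent geometry, while the paper's avoids introducing a second exponent and reuses its Lemma 2.2 verbatim in the proof of Theorem 1.2 as well. One inaccuracy to fix: the pointwise atom estimate cannot have decay exponent $n+s+1$ under the assumption $\delta>s+(n-1)/2$ alone, because the kernel (and hence the non-cancellation case $\varepsilon\le r$ in the proof of the paper's Lemma 3.2) only decays like $|x|^{-(\delta+(n+1)/2)}$, and $\delta+(n+1)/2$ may lie strictly between $n+s$ and $n+s+1$; the correct exponent is $\min\{n+s+1,\ \delta+(n+1)/2\}$, which is exactly the paper's $n/p_1$ with $\delta=n/p_1-(n+1)/2$. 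This does not damage your argument: that exponent is still strictly larger than $n/p$ precisely because $\delta>n/p-(n+1)/2$, so you need only choose $p_0$ with $n\big/\min\{n+s+1,\delta+(n+1)/2\}<p_0<p$ (equivalently $p_1<p_0<p$) and everything else goes through unchanged.
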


\begin{theorem}
Let $0<p\le1$, $\delta>n/p-(n+1)/2$ and $w\in A_1$. Suppose that $\delta-{(n-1)}/2$ is not a positive integer, then there exists a constant $C>0$ independent of $f$ and $R$ such that
\begin{equation*}
\big\|T^\delta_R(f)\big\|_{WH^p_w}\le C\|f\|_{WH^p_w}.
\end{equation*}
\end{theorem}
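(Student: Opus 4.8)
The plan is to use the atomic decomposition of $WH^p_w(\mathbb{R}^n)$ together with the maximal-function characterization of the target space, observing first that the hypotheses $0<p\le1$ and $\delta>n/p-(n+1)/2$ force $\delta>(n-1)/2$ (since $n/p\ge n$). Because $\|g\|_{WH^p_w}\approx\|\mathcal M g\|_{WL^p_w}$, where $\mathcal M$ is the maximal function defining $WH^p_w$ and $\mathcal M$ is dominated pointwise by the Hardy--Littlewood maximal operator $M$ on locally integrable functions, it suffices to bound $\sigma^p\,w(\{\mathcal M(T^\delta_R f)>\sigma\})$ uniformly in $\sigma>0$ and $R>0$. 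Two consequences of $\delta>(n-1)/2$ will be used: the kernel $K^\delta_R$ of $T^\delta_R$ satisfies $\|K^\delta_R\|_{L^1}=\|K^\delta_1\|_{L^1}<\infty$, so $T^\delta_R$ is bounded on $L^\infty$ uniformly in $R$; and $T^\delta_R$ is bounded on $L^2_w$ uniformly in $R$ for every $w\in A_2$ (the standard weighted $L^2$ theory of Bochner--Riesz operators above the critical index). Finally, decompose $f=\sum_{k\in\mathbb Z}\sum_j\lambda_{k,j}a_{k,j}$ with $(p,\infty,s)$-atoms $a_{k,j}$ supported on cubes $Q_{k,j}$ (centre $x_{k,j}$, side $\ell_{k,j}$), $s=\lfloor n(1/p-1)\rfloor$, the family $\{Q_{k,j}\}_j$ having bounded overlap for each fixed $k$, $\lambda_{k,j}\approx 2^k w(Q_{k,j})^{1/p}$, and $\|f\|_{WH^p_w}^p\approx\sup_{k}2^{kp}w(\Omega_k)$, where $\Omega_k=\bigcup_j Q_{k,j}$ are nested and decrease in $k$.

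Fix $\sigma>0$ and choose $k_0\in\mathbb Z$ with $2^{k_0}\le\sigma<2^{k_0+1}$; split $f=f_1+f_2$, where $f_1=\sum_{k\le k_0}\sum_j\lambda_{k,j}a_{k,j}$ and $f_2=f-f_1$. For the low part, the bounded overlap and the size estimates $\lambda_{k,j}\|a_{k,j}\|_\infty\lesssim 2^k$, $\|a_{k,j}\|_{L^2_w}\le w(Q_{k,j})^{1/2-1/p}$ give $\|f_1\|_\infty\lesssim 2^{k_0}\lesssim\sigma$ and, summing a geometric series in $k\le k_0$ and using $w(\Omega_k)\lesssim 2^{-kp}\|f\|_{WH^p_w}^p$,
\begin{equation*}
\|f_1\|_{L^2_w}^2\lesssim\sum_{k\le k_0}2^{2k}w(\Omega_k)\lesssim\sum_{k\le k_0}2^{k(2-p)}\|f\|_{WH^p_w}^p\lesssim\sigma^{2-p}\|f\|_{WH^p_w}^p .
\end{equation*}
Since $T^\delta_R f_1\in L^\infty\subset L^1_{\mathrm{loc}}$, Chebyshev's inequality together with the $L^2_w$-boundedness of $M$ and of $T^\delta_R$ yields
\begin{equation*}
\sigma^p\,w\big(\{\mathcal M(T^\delta_R f_1)>\sigma/2\}\big)\lesssim\sigma^{p-2}\,\|M(T^\delta_R f_1)\|_{L^2_w}^2\lesssim\sigma^{p-2}\,\|f_1\|_{L^2_w}^2\lesssim\|f\|_{WH^p_w}^p .
\end{equation*}

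For the high part, put $\Omega^*=\bigcup_{k>k_0}\bigcup_j 2\sqrt n\,Q_{k,j}$. By the doubling of $w\in A_1$ and $w(\Omega_k)\lesssim 2^{-kp}\|f\|_{WH^p_w}^p$ one gets $w(\Omega^*)\lesssim\sum_{k>k_0}w(\Omega_k)\lesssim 2^{-k_0 p}\|f\|_{WH^p_w}^p\lesssim\sigma^{-p}\|f\|_{WH^p_w}^p$, which is admissible. On the complement of $\Omega^*$ one exploits the vanishing moments of the atoms up to order $s$ against pointwise bounds for $K^\delta_R$ and its derivatives of the form $|\nabla^m K^\delta_R(z)|\lesssim R^{n+m}(1+R|z|)^{-(n+1)/2-\delta}$; this is the step that requires $\delta>n/p-(n+1)/2$, and it is here that the hypothesis that $\delta-(n-1)/2$ is not a positive integer enters, ensuring these kernel estimates hold without logarithmic corrections. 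Expanding $K^\delta_R(x-\cdot)$ in a Taylor polynomial at $x_{k,j}$ and optimizing over $R$ (balancing this ``full cancellation'' bound against the trivial ``no cancellation'' bound $|K^\delta_R|*|a_{k,j}|$) gives, uniformly in $R$ and in the defining bump,
\begin{equation*}
\mathcal M\big(T^\delta_R a_{k,j}\big)(x)\lesssim\|a_{k,j}\|_\infty\Big(\frac{\ell_{k,j}}{|x-x_{k,j}|}\Big)^{N_0},\qquad x\notin 2\sqrt n\,Q_{k,j},
\end{equation*}
with $N_0=n+\min\{\,s+1,\ \delta-(n-1)/2\,\}>n/p$; integrating this decay over annuli and using the $A_1$ bound $w(2^i B)\lesssim 2^{in}w(B)$ then gives $\int_{(2\sqrt n Q_{k,j})^c}[\mathcal M(T^\delta_R a_{k,j})]^p\,w\lesssim\|a_{k,j}\|_\infty^p\,w(Q_{k,j})\lesssim 1$.

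The remaining and decisive step — which I expect to be the main obstacle — is to pass from these per-atom estimates to $\sigma^p\,w(\{x\notin\Omega^*:\mathcal M(T^\delta_R f_2)(x)>\sigma/2\})\lesssim\|f\|_{WH^p_w}^p$. Summing the per-atom bounds crudely produces $\sum_{k>k_0}2^{kp}w(\Omega_k)$, each term of which is only bounded by $\|f\|_{WH^p_w}^p$, so the series diverges; this is the manifestation of the fact that $WL^p_w$ has no genuine triangle inequality, and it is exactly the difficulty that distinguishes this theorem from the strong-type result Theorem~V. The resolution is to exploit the finer structure of the decomposition: the sets $\Omega_k$ are nested and decay geometrically in $w$-measure, and the weak-type control of $f$ furnishes the substitute for $\ell^p$-summability through the inequality $\int_{\{\mathcal M f\le\sigma\}}(\mathcal M f)^q\,w\lesssim\sigma^{q-p}\|f\|_{WH^p_w}^p$ for any $q>p$. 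Bounding each level piece $\mathcal M(T^\delta_R f_k)$ simultaneously by its molecular decay (good far from $\Omega_k$) and by $\|f_k\|_{L^2_w}\lesssim 2^{k(2-p)/2}\|f\|_{WH^p_w}^{p/2}$ (good near $\Omega_k$), organizing the sum by dyadic distance to $\Omega_{k_0+1}$, and inserting the above surrogate summability estimate closes the high-part bound. Adding the two contributions and taking the supremum over $\sigma>0$ and $R>0$ gives $\|T^\delta_R f\|_{WH^p_w}\le C\|f\|_{WH^p_w}$.
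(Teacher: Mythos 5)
Your low-frequency part ($f_1$) and your per-atom molecular estimates follow the paper's route (they are essentially its Lemmas 3.2--3.3 and the estimate of $J_1$), and you correctly identify the decisive obstacle: after removing a \emph{fixed} enlargement $\Omega^*=\bigcup_{k>k_0}\bigcup_j 2\sqrt n\,Q_{k,j}$, the good-part sum $\sum_{k>k_0}\int_{(\Omega^*)^c}\big[\mathcal M(T^\delta_R f_k)\big]^p w\,dx$ has each term of size $\|f\|_{WH^p_w}^p$ and hence diverges. But the resolution you sketch is not a proof. The surrogate inequality $\int_{\{\mathcal M f\le\sigma\}}(\mathcal M f)^q w\lesssim\sigma^{q-p}\|f\|_{WH^p_w}^p$ concerns the maximal function of $f$ itself, and you give no pointwise or distributional link between $\mathcal M(T^\delta_R f_k)$ on $(\Omega^*)^c$ and $\mathcal M f$; likewise, ``using $\|f_k\|_{L^2_w}$ near $\Omega_k$'' cannot help on $(\Omega^*)^c$, since $\|f_k\|_{L^2_w}\lesssim 2^{k(1-p/2)}\|f\|_{WH^p_w}^{p/2}$ grows in $k$ and the loss is exactly the factor $2^{kp}w(\Omega_k)$, which is bounded but never summable. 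So the step you yourself call ``the main obstacle'' is left open.

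The paper closes this step with a concrete device that is absent from your argument: the excised cubes are dilated by a factor that \emph{grows geometrically in $k$}, namely $\widetilde{\widetilde{Q^k_i}}=Q\big(x^k_i,\tau^{(k-k_0)p/n}(4\sqrt n)\,r^k_i\big)$ with a fixed $1<\tau<2$. Then (i) the bad set is still admissible, because $w\big(\widetilde{\widetilde{Q^k_i}}\big)\lesssim\tau^{(k-k_0)p}w(Q^k_i)$ and $\lambda^p\sum_{k>k_0}\tau^{(k-k_0)p}\sum_i w(Q^k_i)\lesssim\|f\|_{WH^p_w}^p\sum_{k>k_0}(\tau/2)^{(k-k_0)p}$, a convergent series since $\tau<2$; and (ii) on the complement the distance $|x-x^k_i|\gtrsim\tau^{(k-k_0)p/n}r^k_i$, combined with the decay exponent $n/p_1$ of Lemma 3.3 (where $0<p_1<p$ is chosen with $\delta=n/p_1-(n+1)/2$) and Lemma 2.2 plus doubling, produces the extra factor $\tau^{(k-k_0)p(1-p/p_1)}$, geometrically small precisely because $p_1<p$, i.e.\ because $\delta$ is strictly above the critical index. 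This is what restores summability in $k$; without some such $k$-dependent enlargement (or an equivalent mechanism) the high-part bound does not close. A secondary correction: the hypothesis that $\delta-(n-1)/2$ is not a positive integer has nothing to do with logarithmic corrections in estimates for $K^\delta_R$ (Lemma 3.1 holds regardless); it is used in the paper's Lemma 3.3, in the term $K_3$, where one needs $n/p_1-n-j>0$ for all $j\le N_1=[n(1/p_1-1)]$, and this fails exactly when $n(1/p_1-1)=\delta-(n-1)/2$ is a positive integer.
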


In particular, if we take $w$ to be a constant function, then we immediately get the following

\newtheorem{cor}[theorem]{Corollary}
\begin{cor}
Let $0<p\le1$ and $\delta>n/p-(n+1)/2$. Then there exists a constant $C>0$ independent of $f$ such that
\begin{equation*}
\big\|T^\delta_*(f)\big\|_{WL^p}\le C\|f\|_{WH^p}.
\end{equation*}
\end{cor}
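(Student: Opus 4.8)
The plan is to deduce the Corollary as the special case $w\equiv\text{const}$ of Theorem~1.1, so the only work is to check that a constant weight satisfies the hypotheses of Theorem~1.1 and that the weighted spaces $WL^p_w$ and $WH^p_w$ collapse to the classical spaces $WL^p$ and $WH^p$. First I would fix a constant $c>0$ and take $w\equiv c$. This $w$ belongs to $A_1$: for every ball $B$ one has $\frac1{|B|}\int_B w(y)\,dy = c = \operatorname*{ess\,inf}_{x\in B} w(x)$, so the $A_1$ inequality holds with constant $1$. Since the exponent condition $\delta>n/p-(n+1)/2$ in the Corollary is literally the hypothesis of Theorem~1.1, all assumptions of Theorem~1.1 are met.

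Next I would record the scaling of the two norms under a constant weight. For any measurable $g$ and any $\lambda>0$ we have $w(\{x:|g(x)|>\lambda\})=c\,|\{x:|g(x)|>\lambda\}|$, hence
\begin{equation*}
\|g\|_{WL^p_w}=\sup_{\lambda>0}\lambda\,\big(w(\{x:|g(x)|>\lambda\})\big)^{1/p}=c^{1/p}\,\|g\|_{WL^p}.
\end{equation*}
In the same way, from the definition of $WH^p_w$ given in Section~2 (whether via the nontangential maximal function together with the weighted weak Lebesgue quasi-norm, or via the weighted atomic decomposition), replacing Lebesgue measure by $c\,dx$ multiplies the quasi-norm by the fixed factor $c^{1/p}$, so $\|f\|_{WH^p_w}=c^{1/p}\,\|f\|_{WH^p}$.

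Finally I would apply Theorem~1.1 to this $w$: there is $C>0$, independent of $f$, with $\|T^\delta_*(f)\|_{WL^p_w}\le C\,\|f\|_{WH^p_w}$. Substituting the two identities above, the common factor $c^{1/p}$ cancels, leaving $\|T^\delta_*(f)\|_{WL^p}\le C\,\|f\|_{WH^p}$ with the same constant $C$, which is exactly the claimed estimate. There is no genuine obstacle in this argument — the entire difficulty lies upstream in the proof of Theorem~1.1, which proceeds through the atomic decomposition of $WH^p_w(\mathbb R^n)$ and a uniform estimate on $(p,w)$-atoms using Theorem~IV; the Corollary itself uses none of that beyond the specialization just described, and the one routine point to verify carefully is the homogeneity of the quasi-norms under a constant weight.
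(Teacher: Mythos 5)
Your proposal is correct and matches the paper's argument: the paper obtains this Corollary precisely by taking $w$ to be a constant function in Theorem~1.1, and your verification that constants lie in $A_1$ and that the factor $c^{1/p}$ cancels on both sides is exactly the (routine) content of that specialization.
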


\begin{cor}
Let $0<p\le1$ and $\delta>n/p-(n+1)/2$. Suppose that $\delta-{(n-1)}/2$ is not a positive integer, then there exists a constant $C>0$ independent of $f$ and $R$ such that
\begin{equation*}
\big\|T^\delta_R(f)\big\|_{WH^p}\le C\|f\|_{WH^p}.
\end{equation*}
\end{cor}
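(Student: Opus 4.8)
The statement is precisely Theorem 1.2 specialized to the constant weight, so strictly speaking there is nothing new to prove. A positive constant $w\equiv c$ satisfies the $A_1$ condition with constant $1$ --- indeed $w\in A_q$ for every $q\ge1$, and its reverse H\"older exponent may be taken arbitrarily large --- while $WH^p_w(\mathbb R^n)=WH^p(\mathbb R^n)$ and $WL^p_w(\mathbb R^n)=WL^p(\mathbb R^n)$, the quasi-norms differing only by the harmless factor $c^{1/p}$. Applying Theorem 1.2 with $w\equiv1$ therefore gives $\big\|T^\delta_R(f)\big\|_{WH^p}\le C\|f\|_{WH^p}$ with $C$ depending only on $n$, $p$ and $\delta$, hence independent of $f$ and $R$. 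So the plan for the corollary is the one-line reduction: invoke Theorem 1.2 with $w\equiv1$.

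Since the real content lies in Theorem 1.2, let me indicate how I would run that argument with $w\equiv1$ (which is all the corollary needs). I would begin with the atomic decomposition of $WH^p$: write $f=\sum_{k\in\mathbb Z}\sum_i\lambda^k_i a^k_i$, where each $a^k_i$ is a $(p,\infty)$-atom supported in a cube $Q^k_i$, $\lambda^k_i\approx 2^k$, and $\sum_i|Q^k_i|\lesssim 2^{-kp}\|f\|^p_{WH^p}$ for each level $k$. Using the grand-maximal-function characterization of $WH^p$, I would reduce the claim to $\big\|\mathcal M(T^\delta_R f)\big\|_{WL^p}\lesssim\|f\|_{WH^p}$. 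For this I would estimate $T^\delta_R a^k_i$ in two regions: on a fixed dilate $2\sqrt n\,Q^k_i$ of the supporting cube, using the $L^2$-boundedness of $T^\delta_R$ uniform in $R$ (Theorem I would serve equally well) together with H\"older's inequality and the $L^2$-boundedness of the Hardy-Littlewood maximal operator; on the complement, using the decay and oscillation of the Bochner-Riesz kernel together with the vanishing moments of the atom, and distinguishing the cases $R\,\ell(Q^k_i)\le1$ and $R\,\ell(Q^k_i)>1$. The hypothesis $\delta>n/p-(n+1)/2$ is exactly what makes the far-region estimate work, and the hypothesis that $\delta-(n-1)/2$ is not a positive integer enters in the kernel analysis, ruling out logarithmic terms in the relevant asymptotic expansion. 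An equivalent and slightly cleaner route is to dominate the grand maximal function of $T^\delta_R f$ pointwise by a maximal Bochner-Riesz operator whose order still exceeds $n/p-(n+1)/2$ and then apply Theorem 1.1.

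With the size and regularity bounds on $T^\delta_R a^k_i$ in hand, I would reassemble $T^\delta_R f=\sum_k\sum_i\lambda^k_i\,T^\delta_R a^k_i$ and, for each fixed $\lambda>0$, split the level sum into the ranges $2^k\lesssim\lambda$ and $2^k>\lambda$, controlling the first by the size estimate and the second by the decay/cancellation estimate, then adding the two contributions to bound $\big|\{x\in\mathbb R^n:\mathcal M(T^\delta_R f)(x)>\lambda\}\big|$. I expect the main obstacle to be not the individual pointwise estimates but the bookkeeping: one must deliver a genuine weak-Hardy bound --- an atomic-type decomposition, or at least a grand-maximal bound, for $T^\delta_R f$ --- rather than a mere weak-Lebesgue bound, and this forces a carefully balanced double summation over the level index $k$ and the within-level index $i$, exploiting $0<p\le1$ to sum the $\ell^p$-type series and the weak-type scaling throughout; the technical restriction on $\delta$ is precisely what makes the kernel decay sharp enough for that summation to converge. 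Via Theorem 1.2 all of this is already carried out, so for the corollary itself the proof is the specialization described above.
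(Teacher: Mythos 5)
Your proposal is correct and matches the paper exactly: the corollary is obtained by the immediate specialization of Theorem 1.2 to a constant weight $w\equiv 1$ (which lies in $A_1$, with $WH^p_w=WH^p$ and $WL^p_w=WL^p$), which is all the paper does. The additional sketch of how Theorem 1.2 itself is proved in the unweighted case is consistent with the paper's argument but is not needed for this statement.
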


\section{Notations and preliminaries}

The definition of $A_p$ class was first used by Muckenhoupt \cite{muckenhoupt}, Hunt, Muckenhoupt and Wheeden \cite{hunt}, and Coifman and Fefferman \cite{coifman} in the study of weighted
$L^p$ boundedness of Hardy-Littlewood maximal functions and singular integrals. Let $w$ be a nonnegative, locally integrable function defined on $\mathbb R^n$; all cubes are assumed to have their sides parallel to the coordinate axes.
We say that $w\in A_p$, $1<p<\infty$, if
\begin{equation*}
\left(\frac1{|Q|}\int_Q w(x)\,dx\right)\left(\frac1{|Q|}\int_Q w(x)^{-1/{(p-1)}}\,dx\right)^{p-1}\le C \quad\mbox{for every cube}\; Q\subseteq \mathbb
R^n,
\end{equation*}
where $C$ is a positive constant which is independent of the choice of $Q$.

For the case $p=1$, $w\in A_1$, if
\begin{equation*}
\frac1{|Q|}\int_Q w(x)\,dx\le C\cdot\underset{x\in Q}{\mbox{ess\,inf}}\,w(x)\quad\mbox{for every cube}\;Q\subseteq\mathbb R^n.
\end{equation*}
A weight function $w\in A_\infty$ if it satisfies the $A_p$ condition for some $1<p<\infty$. It is well known that if $w\in A_p$ with $1<p<\infty$, then $w\in A_r$ for all $r>p$, and $w\in A_q$ for some $1<q<p$. We thus write $q_w\equiv\inf\{q>1:w\in A_q\}$ to denote the critical index of $w$.

Given a cube $Q$ and $\lambda>0$, $\lambda Q$ denotes the cube with the same center as $Q$ whose side length is $\lambda$ times that of $Q$. $Q=Q(x_0,r)$ denotes the cube centered at $x_0$ with side length $r$. For a weight function $w$ and a measurable set $E$, we denote the Lebesgue measure of $E$ by $|E|$ and set the weighted measure $w(E)=\int_E w(x)\,dx$.

We state the following results that will be used in the sequel.

\newtheorem{lemma}[theorem]{Lemma}
\begin{lemma}[\cite{garcia2}]
Let $w\in A_1$. Then, for any cube $Q$, there exists an absolute constant $C>0$ such that
$$w(2Q)\le C\,w(Q).$$
In general, for any $\lambda>1$, we have
$$w(\lambda Q)\le C\cdot\lambda^{n}w(Q),$$
where $C$ does not depend on $Q$ nor on $\lambda$.
\end{lemma}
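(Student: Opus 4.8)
The plan is to derive both inequalities directly from the defining property of the $A_1$ class, exploiting nothing more than the monotonicity of the essential infimum under set inclusion. Fix a cube $Q$ and a dilation parameter $\lambda>1$, so that $Q\subseteq\lambda Q$ and $|\lambda Q|=\lambda^n|Q|$. First I would apply the $A_1$ condition to the cube $\lambda Q$, which gives
\[
w(\lambda Q)=\int_{\lambda Q}w(x)\,dx\le C\,|\lambda Q|\cdot\operatorname*{ess\,inf}_{x\in\lambda Q}w(x),
\]
where $C$ is the $A_1$ constant of $w$, uniform over all cubes by definition.

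Next, since $Q\subseteq\lambda Q$, taking the essential infimum over the smaller set can only increase it, so $\operatorname*{ess\,inf}_{x\in\lambda Q}w(x)\le\operatorname*{ess\,inf}_{x\in Q}w(x)$, and trivially $\operatorname*{ess\,inf}_{x\in Q}w(x)\le\frac1{|Q|}\int_Q w(x)\,dx=\frac{w(Q)}{|Q|}$. Chaining these estimates yields
\[
w(\lambda Q)\le C\,|\lambda Q|\cdot\frac{w(Q)}{|Q|}=C\,\lambda^n\,w(Q),
\]
which is the general claim; specializing to $\lambda=2$ gives $w(2Q)\le C\,2^n w(Q)$, i.e.\ the first assertion with a suitably renamed absolute constant.

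Honestly, I do not expect any real obstacle here: the argument is three lines. The only point deserving a moment's care is to confirm that the constant obtained is genuinely independent of both $Q$ and $\lambda$ — this is immediate because the $A_1$ constant is uniform over cubes, and all dependence on $\lambda$ has been made explicit as the factor $\lambda^n$, which arises purely from the Lebesgue volume ratio $|\lambda Q|/|Q|=\lambda^n$. As an alternative one could simply observe that $A_1\subseteq A_\infty$ and quote the general doubling property of $A_\infty$ weights, but the direct computation above is cleaner and, as a bonus, produces the sharp exponent $\lambda^n$.
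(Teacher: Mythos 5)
Your proof is correct: applying the $A_1$ condition to the dilated cube $\lambda Q$ and then using $\operatorname*{ess\,inf}_{\lambda Q}w\le\operatorname*{ess\,inf}_{Q}w\le\frac{1}{|Q|}\int_Q w$ gives exactly $w(\lambda Q)\le C\lambda^n w(Q)$ with $C$ the $A_1$ constant, independent of $Q$ and $\lambda$. The paper itself gives no proof (it quotes the lemma from Garcia-Cuerva and Rubio de Francia), and your three-line argument is the standard derivation of this doubling estimate for $A_1$ weights, so there is nothing to add.
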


\begin{lemma}[\cite{garcia2}]
Let $w\in A_q$ with $q>1$. Then, for all $r>0$, there exists a constant $C>0$ independent of $r$ such that
\begin{equation*}
\int_{|x|\ge r}\frac{w(x)}{|x|^{nq}}\,dx\le C\cdot r^{-nq}w\big(Q(0,2r)\big).
\end{equation*}
\end{lemma}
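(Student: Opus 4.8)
The plan is to reduce everything to a dyadic annular decomposition about the origin, using that on each annulus the factor $|x|^{-nq}$ is essentially constant. First I would write
$$\big\{x\in\mathbb R^n:|x|\ge r\big\}=\bigcup_{k=0}^\infty E_k,\qquad E_k:=\big\{x\in\mathbb R^n: 2^kr\le|x|<2^{k+1}r\big\}.$$
On $E_k$ we have $|x|^{nq}\ge(2^kr)^{nq}$, and moreover $E_k\subseteq\{x:|x|<2^{k+1}r\}\subseteq Q\big(0,2^{k+2}r\big)=2^{k+1}Q(0,2r)$ (the cube of side length $2^{k+2}r$ centered at the origin contains the ball of radius $2^{k+1}r$). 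Since $w\ge0$, this gives
$$\int_{E_k}\frac{w(x)}{|x|^{nq}}\,dx\le\big(2^kr\big)^{-nq}\,w\big(2^{k+1}Q(0,2r)\big).$$

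The key point is now to bound $w\big(2^{k+1}Q(0,2r)\big)$ by $w\big(Q(0,2r)\big)$ times a dilation factor \emph{strictly smaller} than $2^{(k+1)nq}$; the crude doubling bound $w(\lambda Q)\le C\lambda^{nq}w(Q)$ available for a general $A_q$ weight is not enough, since it would leave a non-summable series. This is exactly where the self-improvement of the Muckenhoupt classes enters: since $w\in A_q$ with $q>1$, there is $q_1$ with $1<q_1<q$ and $w\in A_{q_1}$. For such a weight the inequality $\big(|E|/|Q|\big)^{q_1}\le C\,w(E)/w(Q)$ for all measurable $E\subseteq Q$ — a one-line consequence of the $A_{q_1}$ condition via Hölder's inequality — yields $w(\lambda Q)\le C\,\lambda^{nq_1}\,w(Q)$ for every $\lambda\ge1$. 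Taking $\lambda=2^{k+1}$ we obtain
$$\int_{E_k}\frac{w(x)}{|x|^{nq}}\,dx\le C\,2^{nq_1}\,r^{-nq}\,2^{kn(q_1-q)}\,w\big(Q(0,2r)\big).$$

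Finally I would sum over $k\ge0$. As $q_1<q$, the exponent $n(q_1-q)$ is negative, so $\sum_{k=0}^\infty 2^{kn(q_1-q)}=\big(1-2^{n(q_1-q)}\big)^{-1}<\infty$, and summing the displayed bound gives
$$\int_{|x|\ge r}\frac{w(x)}{|x|^{nq}}\,dx\le C\,r^{-nq}\,w\big(Q(0,2r)\big)$$
with $C$ independent of $r$, as claimed. The only non-routine step is the one I flagged: one must descend to a smaller exponent $q_1<q$ (equivalently, use that a fixed $A_q$ weight is doubling of some order $<nq$) to make the geometric series converge; everything else is a completely routine annular estimate. I also note that when $w\in A_1$ — the case actually needed later in the paper — this is automatic: the doubling estimate of the preceding lemma gives $w(\lambda Q)\le C\lambda^n w(Q)$ directly, and $\sum_k 2^{kn(1-q)}<\infty$ since $q>1$, so no appeal to self-improvement is required.
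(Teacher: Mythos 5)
Your argument is correct. Note that the paper itself gives no proof of this lemma at all -- it is quoted from Garc\'ia-Cuerva and Rubio de Francia \cite{garcia2} -- so there is nothing internal to compare against; what you have written is a complete, self-contained proof of the cited fact. You also correctly identified the one genuinely non-trivial point: the crude bound $w(\lambda Q)\le C\lambda^{nq}w(Q)$ coming from the $A_q$ condition with the same exponent $q$ leaves a divergent geometric series, so one must pass to $w\in A_{q_1}$ with $1<q_1<q$ (the openness of the $A_q$ classes, which the paper records in Section 2) to get doubling of order $nq_1<nq$; an equivalent alternative at this step would be the reverse-doubling property of the measure $w\,dx$, but your route is the standard one. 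Your closing remark is also apt: in the paper the lemma is only ever applied with $w\in A_1$ and $q=p/p_1>1$, where Lemma 2.1 already gives $w(\lambda Q)\le C\lambda^{n}w(Q)$ and the series $\sum_k 2^{kn(1-q)}$ converges without any self-improvement.
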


Given a weight function $w$ on $\mathbb R^n$, for $0<p<\infty$, we denote by $L^p_w(\mathbb R^n)$ the weighted space of all functions satisfying
\begin{equation*}
\|f\|_{L^p_w}=\left(\int_{\mathbb R^n}|f(x)|^pw(x)\,dx\right)^{1/p}<\infty.
\end{equation*}
When $p=\infty$, $L^\infty_w(\mathbb R^n)$ will be taken to mean $L^\infty(\mathbb R^n)$, and
\begin{equation*}
\|f\|_{L^\infty_w}=\|f\|_{L^\infty}=\underset{x\in\mathbb R^n}{\mbox{ess\,sup}}\,|f(x)|.
\end{equation*}
We also denote by $WL^p_w(\mathbb R^n)$ the weighted weak $L^p$ space which is formed by all functions satisfying
\begin{equation*}
\|f\|_{WL^p_w}=\sup_{\lambda>0}\lambda\cdot w\big(\big\{x\in\mathbb R^n:|f(x)|>\lambda\big\}\big)^{1/p}<\infty.
\end{equation*}

Let us now turn to the weighted weak Hardy spaces, which are good substitutes for the weighted
Hardy spaces in the study of the boundedness of some operators. The weak $H^p$ spaces have first appeared in the work of Fefferman, Rivi\`ere and Sagher \cite{cfefferman}. The atomic decomposition theory of weak $H^1$ space on $\mathbb R^n$ was given by Fefferman and Soria \cite{rfefferman}. Later, Liu \cite{liu} established the weak $H^p$ spaces on homogeneous
groups. In 2000, Quek and Yang \cite{quek} introduced the weighted weak Hardy spaces $WH^p_w(\mathbb R^n)$ and established their atomic decompositions. Moreover, by using the atomic decomposition theory of $WH^p_w(\mathbb R^n)$, Quek and Yang \cite{quek} also studied the boundedness of Calder\'on-Zygmund-type operators on these spaces.

We write $\mathscr S(\mathbb R^n)$ to denote the Schwartz space of all rapidly decreasing smooth functions and $\mathscr S'(\mathbb R^n)$ to denote the space of all tempered distributions, i.e., the topological dual of $\mathscr S(\mathbb R^n)$. Let $w\in A_\infty$, $0<p\le1$ and $N\ge[n(q_w/p-1)]$. Define
\begin{equation*}
\mathscr A_{N,w}=\Big\{\varphi\in\mathscr S(\mathbb R^n):\sup_{x\in\mathbb R^n}\sup_{|\alpha|\le N+1}(1+|x|)^{N+n+1}|D^\alpha\varphi(x)|\le1\Big\},
\end{equation*}
where $\alpha=(\alpha_1,\dots,\alpha_n)\in(\mathbb N\cup\{0\})^n$, $|\alpha|=\alpha_1+\dots+\alpha_n$, and
\begin{equation*}
D^\alpha\varphi=\frac{\partial^{|\alpha|}\varphi}{\partial x^{\alpha_1}_1\cdots\partial x^{\alpha_n}_n}.
\end{equation*}
For $f\in\mathscr S'(\mathbb R^n)$, the nontangential grand maximal function of $f$ is defined by
\begin{equation*}
G_w f(x)=\sup_{\varphi\in\mathscr A_{N,w}}\sup_{|y-x|<t}\big|(\varphi_t*f)(y)\big|.
\end{equation*}
Then we say that a tempered distribution $f$ belongs to the weighted weak Hardy space $WH^p_w(\mathbb R^n)$ if $G_w f\in WL^p_w(\mathbb R^n)$ and we define $\|f\|_{WH^p_w}=\|G_w f\|_{WL^p_w}$.
In order to simplify the computations, we shall also use another equivalent definition of $WH^p_w(\mathbb R^n)$.
\begin{equation*}
WH^p_w(\mathbb R^n)=\big\{f\in\mathscr S'(\mathbb R^n):G^+_w f\in WL^p_w(\mathbb R^n)\big\},
\end{equation*}
where $G^+_w f$ is called the radial grand maximal function, which is defined by
\begin{equation*}
G^+_w f(x)=\sup_{\varphi\in\mathscr A_{N,w}}\sup_{t>0}\big|(\varphi_t*f)(x)\big|.
\end{equation*}
Moreover, we set $\|f\|_{WH^p_w}=\|G^+_w f\|_{WL^p_w}$.

\begin{theorem}[\cite{quek}]
Let $0<p\le1$ and $w\in A_\infty$. For every $f\in WH^p_w(\mathbb R^n)$, there exists a sequence of bounded measurable functions $\{f_k\}_{k=-\infty}^\infty$ such that

$(i)$ $f=\sum_{k=-\infty}^\infty f_k$ in the sense of distributions.

$(ii)$ Each $f_k$ can be further decomposed into $f_k=\sum_i b^k_i$, where $\{b^k_i\}$ satisfies

\quad $(a)$ Each $b^k_i$ is supported in a cube $Q^k_i$ with $\sum_{i}w(Q^k_i)\le c2^{-kp}$, and $\sum_i\chi_{Q^k_i}(x)\le c$. Here $\chi_E$ denotes the characteristic function of the set $E$ and $c\sim\big\|f\big\|_{WH^p_w}^p;$

\quad $(b)$ $\|b^k_i\|_{L^\infty}\le C2^k;$

\quad $(c)$ $\int_{\mathbb R^n}b^k_i(x)x^\alpha\,dx=0$ for every multi-index $\alpha$ with $|\alpha|\le[n({q_w}/p-1)]$.

Conversely, if $f\in\mathscr S'(\mathbb R^n)$ has a decomposition satisfying $(i)$ and $(ii)$, then $f\in WH^p_w(\mathbb R^n)$. Moreover, we have $\big\|f\big\|_{WH^p_w}^p\sim c.$

It should be pointed out that the moment condition $c)$ can be replaced by

\quad $(c')$ $\int_{\mathbb R^n}b^k_i(x)x^\alpha\,dx=0$ for every multi-index $\alpha$ with $|\alpha|\le s$, and $s\ge[n({q_w}/p-1)]$. This condition will be used in the proofs of our main results.
\end{theorem}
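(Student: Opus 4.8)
The plan is to build the functions $\{f_k\}$ from a Calder\'on--Zygmund decomposition of $f$ performed simultaneously at all dyadic heights, following the classical $H^p$ atomic decomposition and its weak-type refinements (Fefferman--Soria, Liu), but with Lebesgue measure everywhere replaced by $w\,dx$ and the doubling property of $A_\infty$ weights (the analogue of Lemma 2.1) invoked whenever a cube is dilated. First, since $G_w^+f$ is lower semicontinuous, the level sets $\Omega_k=\{x\in\mathbb R^n:G_w^+f(x)>2^k\}$, $k\in\mathbb Z$, are open and decreasing, $|\bigcap_k\Omega_k|=0$, and the definition of the norm gives $w(\Omega_k)\le2^{-kp}\|f\|_{WH^p_w}^p$. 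One takes a Whitney decomposition $\Omega_k=\bigcup_iQ^k_i$ into cubes with pairwise disjoint interiors whose side lengths $\ell(Q^k_i)$ are comparable to their distance to $\Omega_k^c$, so that the dilates $\{\frac{9}{8}Q^k_i\}_i$ have bounded overlap, together with a subordinate smooth partition of unity $\{\zeta^k_i\}_i$, $\chi_{Q^k_i}\le\zeta^k_i\le\chi_{\frac{9}{8}Q^k_i}$ after renormalizing $\sum_i\zeta^k_i=\chi_{\Omega_k}$, with $|D^\alpha\zeta^k_i|\le C\ell(Q^k_i)^{-|\alpha|}$. Since the $Q^k_i$ are essentially disjoint, $\sum_iw(Q^k_i)=w(\Omega_k)\le2^{-kp}\|f\|_{WH^p_w}^p$, which with the overlap bound yields property $(a)$. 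Fixing $s\ge[n(q_w/p-1)]$, for each $(k,i)$ one lets $P^k_i$ be the unique polynomial of degree $\le s$ with $\int(f\zeta^k_i-P^k_i\zeta^k_i)(x)\,x^\alpha\,dx=0$ for all $|\alpha|\le s$ (orthogonalization in $L^2(\zeta^k_i\,dx)$), and sets $g_k=f\chi_{\Omega_k^c}+\sum_iP^k_i\zeta^k_i$ and $f_k=g_{k+1}-g_k$.

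The standard pointwise estimates — bounding $f\zeta^k_i$, hence $P^k_i$, by the values of $G_w^+f$ at points of $\Omega_k^c$ lying at distance $\sim\ell(Q^k_i)$ from $Q^k_i$ — will give $\|g_k\|_{L^\infty}\le C2^k$ together with $g_k\to f$ in $\mathscr S'$ as $k\to+\infty$ and $g_k\to0$ as $k\to-\infty$, so that telescoping yields $f=\sum_kf_k$ in $\mathscr S'$, i.e. $(i)$. For $(ii)$, $(b)$, $(c')$ one writes $g_{k+1}-g_k=f\chi_{\Omega_k\setminus\Omega_{k+1}}+\sum_jP^{k+1}_j\zeta^{k+1}_j-\sum_iP^k_i\zeta^k_i$ and regroups this expression over the cubes $Q^k_i$ of $\Omega_k$ (using $\sum_j\zeta^{k+1}_j=\chi_{\Omega_{k+1}}$ on $\Omega_{k+1}$, and a further orthogonalization of each product $P^k_i\zeta^k_i\zeta^{k+1}_j$ against polynomials of degree $\le s$): this presents $f_k$ as $\sum_ib^k_i$ with each $b^k_i$ supported in a fixed dilate of $Q^k_i$ — reabsorbed into $Q^k_i$ by doubling — with $\int b^k_i(x)\,x^\alpha\,dx=0$ for $|\alpha|\le s$ and $\|b^k_i\|_{L^\infty}\le C2^k$ coming from the $L^\infty$ bounds on the individual pieces. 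This establishes one direction with $c$ a fixed multiple of $\|f\|_{WH^p_w}^p$.

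For the converse, given a decomposition satisfying $(i),(ii),(a),(b),(c')$, I would show $\|f\|_{WH^p_w}^p\le Cc$ by the usual weak-Hardy argument: fix $\lambda>0$, pick $k_0$ with $2^{k_0}\le\lambda<2^{k_0+1}$, and split $f=F_1+F_2$ with $F_1=\sum_{k\le k_0}f_k$, $F_2=\sum_{k>k_0}f_k$. For $F_1$, the bounds $\|f_k\|_{L^\infty}\le C2^k$, $\mathrm{supp}\,f_k\subseteq\bigcup_iQ^k_i$, $\sum_iw(Q^k_i)\le c2^{-kp}$ give, after a routine computation, $\|F_1\|_{L^q_w}^q\le Cc\,2^{k_0(q-p)}$ for any fixed $q>\max\{1,q_w\}$; since $F_1\in L^\infty\subseteq L^1_{\mathrm{loc}}$ we have $G_w^+F_1\le CMF_1$ pointwise, and $M$ is bounded on $L^q_w$ as $w\in A_q$, so Chebyshev and $2^{k_0}\sim\lambda$ give $w(\{G_w^+F_1>\lambda/2\})\le Cc\,\lambda^{-p}$. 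For $F_2$, writing $b^k_i=C2^kw(Q^k_i)^{1/p}a^k_i$ exhibits each $a^k_i$ as a weighted $(p,\infty)$-atom, and — choosing $p_2\in(0,p)$ sufficiently close to $p$, which is possible because $s\ge[n(q_w/p-1)]$ and $N\ge[n(q_w/p-1)]$ — a variant of the standard atomic estimate (using the vanishing moments $(c')$ and Lemma 2.2) gives $\|G_w^+b^k_i\|_{L^{p_2}_w}^{p_2}\le C2^{kp_2}w(Q^k_i)$; since $p_2\le1$, sublinearity of $G_w^+$ and $(\sum a_j)^{p_2}\le\sum a_j^{p_2}$ yield $\int(G_w^+F_2)^{p_2}w\le\sum_{k>k_0}\sum_i\|G_w^+b^k_i\|_{L^{p_2}_w}^{p_2}\le Cc\sum_{k>k_0}2^{k(p_2-p)}\le Cc\,2^{k_0(p_2-p)}$, and since $p_2<p$ and $2^{k_0}\sim\lambda$ this gives $w(\{G_w^+F_2>\lambda/2\})\le C\lambda^{-p_2}\int(G_w^+F_2)^{p_2}w\le Cc\,\lambda^{-p}$. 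Adding and taking the supremum over $\lambda$ gives $\|f\|_{WH^p_w}^p\le Cc$; the reverse inequality is the content of the forward direction.

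The main obstacle will be the regrouping in the second paragraph: since a cube of $\Omega_{k+1}$ may meet several cubes of $\Omega_k$, writing $g_{k+1}-g_k$ as a sum of pieces each supported in a \emph{single} cube of $\Omega_k$ forces one to re-expand and redistribute every polynomial correction, and this has to be arranged so that the resulting $b^k_i$ simultaneously retain the sharp bound $\|b^k_i\|_{L^\infty}\le C2^k$, all vanishing moments up to order $s$, and supports inside a fixed dilate of $Q^k_i$ — while in the weighted setting each passage from a cube to a dilate is absorbed via $w\in A_1$ (Lemma 2.1) without spoiling $\sum_iw(Q^k_i)\le c2^{-kp}$. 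Once this combinatorial scaffolding is set up, the remaining pieces — the pointwise comparison of $f\zeta^k_i$ and $P^k_i$ with $G_w^+f$, the telescoping, and the atomic $L^q_w$ and $L^{p_2}_w$ estimates of the converse — are routine and follow the well-trodden $H^p$ and weak-$H^p$ path.
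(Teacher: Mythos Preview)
The paper does not prove this statement at all: Theorem~2.3 is quoted from Quek and Yang \cite{quek} and stated without proof, so there is no ``paper's own proof'' to compare against. Your outline is the standard argument one finds in that reference and its predecessors (Fefferman--Soria, Liu): Whitney decomposition of the level sets $\Omega_k=\{G_w^+f>2^k\}$, polynomial projections to enforce the moment conditions, telescoping $f_k=g_{k+1}-g_k$, and the delicate regrouping you correctly flag as the main difficulty. The converse via the $F_1/F_2$ split is also the standard route and is in fact exactly the mechanism the paper itself exploits (with $T^\delta_*$ or $G_w^+\circ T^\delta_R$ in place of $G_w^+$) in the proofs of Theorems~1.1 and~1.2.

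Two small points. First, in your final paragraph you write ``absorbed via $w\in A_1$ (Lemma~2.1)'', but the theorem is stated for $w\in A_\infty$; the doubling estimate you need is $w(\lambda Q)\le C\lambda^{nq}w(Q)$ for $w\in A_q$, not the sharper $A_1$ version --- this does not affect the argument, but be consistent. Second, your justification for choosing $p_2<p$ close to $p$ in the converse is correct but worth making explicit: the tail estimate for $G_w^+b^k_i$ coming from the order-$s$ moments gives decay like $|x-x^k_i|^{-(n+s+1)}$, and integrability against $w$ in $L^{p_2}_w$ requires $p_2(n+s+1)>nq_w$; since $s\ge[n(q_w/p-1)]$ forces $s+1>n(q_w/p-1)$, continuity in $p_2$ gives a small window below $p$ where the inequality persists.
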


In particular, for $w$ equals to a constant function, we shall denote $WL^p_w(\mathbb R^n)$ and $WH^p_w(\mathbb R^n)$ simply by $WL^p(\mathbb R^n)$ and $WH^p(\mathbb R^n)$.

Throughout this article $C$ denotes a positive constant, which is independent of the main parameters and not necessarily the same at each occurrence.

\section{Some auxiliary lemmas}

The Bochner-Riesz operators can be expressed as convolution operators
\begin{equation*}
T^\delta_Rf(x)=(\phi_{1/R}*f)(x),
\end{equation*}
where $\phi(x)=[(1-|\cdot|^2)^\delta_+]\mbox{\textasciicircum}(x)$ and $\phi_{1/R}(x)=R^n\phi(Rx)$. It is well known that the kernel $\phi$ can be represented as (see \cite{lu2,stein2})
\begin{equation*}
\phi(x)=\pi^{-\delta}\Gamma(\delta+1)|x|^{-(\frac n2+\delta)}J_{\frac n2+\delta}(2\pi|x|),
\end{equation*}
where $J_\mu(t)$ is the Bessel function
\begin{equation*}
J_\mu(t)=\frac{(\frac{t}{2})^\mu}{\Gamma(\mu+\frac12)\Gamma(\frac12)}\int_{-1}^1e^{its}(1-s^2)^{\mu-\frac12}\,ds.
\end{equation*}
The following kernel estimates of these convolution operators are well known. For its proof, we refer the readers to \cite{sato1}. See also [9, page 121].

\begin{lemma}
Let $0<p_1<1$ and $\delta=n/{p_1}-(n+1)/2$. Then the kernel $\phi$ satisfies the inequality
\begin{equation*}
\sup_{x\in\mathbb R^n}(1+|x|)^{n/{p_1}}\big|D^\alpha\phi(x)\big|\le C\quad\mbox{for all multi-indices}\;\alpha.
\end{equation*}
\end{lemma}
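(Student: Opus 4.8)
The plan is to treat separately the unit ball $\{|x|\le 1\}$ and its exterior $\{|x|>1\}$. On the ball the inequality will be immediate once we know that $\phi$ is smooth with bounded derivatives; on the exterior it reduces to the pointwise decay estimate $|D^\alpha\phi(x)|\le C\,|x|^{-n/p_1}$, and the whole point is that the hypothesis $\delta=n/p_1-(n+1)/2$ makes $n/p_1=(n/2+\delta)+\tfrac12$, where the ``$n/2+\delta$'' comes from the prefactor $|x|^{-(n/2+\delta)}$ in $\phi$ and the extra ``$\tfrac12$'' from the decay of the Bessel function $J_{n/2+\delta}$.

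For $|x|\le1$: since $\widehat\phi=(1-|\cdot|^2)^\delta_+$, one has $D^\alpha\phi=c\,\mathcal F^{-1}\!\big[\xi^\alpha(1-|\xi|^2)^\delta_+\big]$ in $\mathscr S'(\mathbb R^n)$. Because $\delta>0$, the function $\xi^\alpha(1-|\xi|^2)^\delta_+$ is bounded and supported in the closed unit ball, hence lies in $L^1(\mathbb R^n)$, so $D^\alpha\phi$ is continuous and bounded on $\mathbb R^n$ with $\|D^\alpha\phi\|_{L^\infty}\le C_\alpha$. Consequently $\sup_{|x|\le1}(1+|x|)^{n/p_1}|D^\alpha\phi(x)|\le 2^{n/p_1}C_\alpha<\infty$.

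For $|x|>1$: I would write $\phi(x)=F(|x|)$ with $F(r)=c_\delta\,r^{-m}J_m(2\pi r)$, where $c_\delta=\pi^{-\delta}\Gamma(\delta+1)$ and $m=n/2+\delta$. For $|\alpha|\ge1$ the chain rule for radial functions gives, by an easy induction on $|\alpha|$,
\[
D^\alpha\phi(x)=\sum_{k=1}^{|\alpha|}F^{(k)}(|x|)\,R_{\alpha,k}(x),
\]
where each $R_{\alpha,k}$ is smooth on $\mathbb R^n\setminus\{0\}$ and homogeneous of degree $k-|\alpha|\le0$, so $|R_{\alpha,k}(x)|\le C\,|x|^{k-|\alpha|}\le C$ for $|x|>1$ (the case $\alpha=0$ being $\phi(x)=F(|x|)$ itself). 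It thus suffices to show $|F^{(k)}(r)|\le C_k\,r^{-m-1/2}$ for all $r\ge1$ and all $k\ge0$. Differentiating $F$ and using the Bessel recurrence $J'_\nu(t)=\tfrac{\nu}{t}J_\nu(t)-J_{\nu+1}(t)$ repeatedly, one checks by induction that $F^{(k)}(r)$ is a finite linear combination of expressions $r^{-m-j}J_{m+l}(2\pi r)$ with integers $0\le j,l\le k$ — already $F'(r)=-2\pi c_\delta\,r^{-m}J_{m+1}(2\pi r)$. Combining this with the classical bound $|J_\nu(t)|\le C_\nu(1+t)^{-1/2}$ (valid for all $\nu\ge m>0$), each such term is at most $C\,r^{-m}(1+r)^{-1/2}\le C\,r^{-m-1/2}$ for $r\ge1$, which proves the claim. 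Hence $|D^\alpha\phi(x)|\le C\,|x|^{-m-1/2}=C\,|x|^{-n/p_1}$ for $|x|>1$, and therefore $(1+|x|)^{n/p_1}|D^\alpha\phi(x)|\le C$ for every $x\in\mathbb R^n$.

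The step that needs genuine care is the uniform-in-$k$ derivative bound for $F$: one must verify that no differentiation degrades the $r^{-1/2}$ decay carried by the Bessel factor, which is exactly where the arithmetic of Bessel functions enters. Routing everything through the recurrence relations keeps the analytic input to a minimum — only the elementary estimate $|J_\nu(t)|\le C_\nu(1+t)^{-1/2}$ is needed, rather than the full asymptotic expansion $J_\nu(t)=t^{-1/2}\big(e^{it}a_\nu(t)+e^{-it}\overline{a_\nu(t)}\big)$ with slowly varying amplitude $a_\nu$ (which would be an equally valid but heavier route); the radial chain-rule bookkeeping and the small-$|x|$ estimate are then routine.
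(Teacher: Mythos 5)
Your proof is correct, and it is worth noting that the paper itself does not prove Lemma 3.1 at all: it is quoted as a known kernel estimate with references to Sato and to Lu--Wang, where it is usually derived from the full asymptotic expansion of Bessel functions. Your argument is a legitimate self-contained substitute taking the lighter route you describe: the splitting at $|x|=1$, the $L^1$-compact-support argument for the interior (which is where boundedness of all $D^\alpha\phi$ comes from, since $\xi^\alpha(1-|\xi|^2)^\delta_+\in L^1$), and for the exterior the radial chain rule together with the identity $\frac{d}{dt}\bigl(t^{-\nu}J_\nu(t)\bigr)=-t^{-\nu}J_{\nu+1}(t)$ and the recurrence $J_\nu'(t)=\frac{\nu}{t}J_\nu(t)-J_{\nu+1}(t)$, so that every derivative of $F(r)=c_\delta r^{-m}J_m(2\pi r)$ stays a combination of terms $r^{-m-j}J_{m+l}(2\pi r)$ and the only analytic input is $|J_\nu(t)|\le C_\nu(1+t)^{-1/2}$; the arithmetic $m+\tfrac12=\tfrac n2+\delta+\tfrac12=\tfrac n{p_1}$ is exactly right. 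What your route buys is that no stationary-phase expansion is needed, at the cost of the (correctly handled) bookkeeping in the induction on $k$ and on the homogeneity of the $R_{\alpha,k}$. One small caveat you should make explicit: all constants ($(2\pi)^{|\alpha|}$ from the interior estimate, $C_{m+l}$ from the Bessel bounds, and the number of terms in the chain-rule expansion) depend on $\alpha$, so the conclusion holds with $C=C(n,\delta,\alpha)$ rather than uniformly in $\alpha$; this is also how the lemma is stated in the cited sources and how it is used in Lemmas 3.2 and 3.3, where only $|\alpha|\le N_1+1$ occurs, so it is not a defect, but the phrase ``for all multi-indices $\alpha$'' should not be read as asserting a single constant.
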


Before proving our main theorems, we need to establish the following two auxiliary lemmas.

\begin{lemma}
Let $0<p_1<1$ and $\delta=n/{p_1}-(n+1)/2$. Then for any given function $b\in L^\infty(\mathbb R^n)$ with support contained in $Q=Q(x_0,r)$, and
\begin{equation*}
\int_{\mathbb R^n}b(x)x^\gamma\,dx=0 \quad\mbox{for every multi-index}\;\;|\gamma|\le N_1=[n(1/{p_1}-1)],
\end{equation*}
we have
\begin{equation*}
T^\delta_*(b)(x)\le C\cdot\|b\|_{L^\infty}\frac{r^{n/{p_1}}}{|x-x_0|^{n/{p_1}}}, \quad \mbox{whenever}\; \;|x-x_0|>\sqrt{n}r.
\end{equation*}
Here and in what follows, we always denote $N_1=[n(1/{p_1}-1)]$.
\end{lemma}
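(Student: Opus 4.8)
The plan is to exploit the pointwise kernel estimate of Lemma 3.1 together with the vanishing moments of $b$. Fix $x$ with $|x-x_0|>\sqrt n\,r$ and fix $R>0$; we must bound $|T^\delta_R b(x)| = |(\phi_{1/R}*b)(x)|$ uniformly in $R$. Writing out the convolution and inserting the Taylor polynomial of $\phi_{1/R}(x-\cdot)$ at the center $x_0$, the moment condition $\int b(y)\,y^\gamma\,dy=0$ for $|\gamma|\le N_1$ lets us subtract that polynomial for free:
\begin{equation*}
T^\delta_R b(x)=\int_{Q}\Big(\phi_{1/R}(x-y)-\sum_{|\gamma|\le N_1}\frac{D^\gamma\phi_{1/R}(x-x_0)}{\gamma!}(x_0-y)^\gamma\Big)b(y)\,dy.
\end{equation*}
By Taylor's theorem the bracket equals a sum over $|\gamma|=N_1+1$ of terms $\tfrac{(x_0-y)^\gamma}{\gamma!}D^\gamma\phi_{1/R}(x-\xi_y)$ for some $\xi_y$ on the segment from $x_0$ to $y$; since $y\in Q=Q(x_0,r)$ we have $|x_0-y|\le\sqrt n\,r$ and, because $|x-x_0|>\sqrt n\,r$, also $|x-\xi_y|\gtrsim|x-x_0|$.

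Next I would feed in Lemma 3.1. Since $\phi_{1/R}(z)=R^n\phi(Rz)$, we have $D^\gamma\phi_{1/R}(z)=R^{n+|\gamma|}(D^\gamma\phi)(Rz)$, and Lemma 3.1 gives $|(D^\gamma\phi)(Rz)|\le C(1+R|z|)^{-n/p_1}$. Hence
\begin{equation*}
\big|D^\gamma\phi_{1/R}(x-\xi_y)\big|\le C\,\frac{R^{n+N_1+1}}{(1+R|x-x_0|)^{n/p_1}}\le C\,\frac{R^{n+N_1+1}}{(R|x-x_0|)^{n/p_1}}=\frac{C}{|x-x_0|^{n/p_1}}\,R^{\,n+N_1+1-n/p_1},
\end{equation*}
using $1+R|x-x_0|\ge R|x-x_0|$. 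The key arithmetic point is that the exponent of $R$ is nonnegative exactly at the borderline and we need it to not blow up: $N_1=[n(1/p_1-1)]\le n/p_1-n$, so $n+N_1+1-n/p_1\le 1$; more importantly, combined with the crude alternative bound $|D^\gamma\phi_{1/R}(z)|\le C R^{n+N_1+1}$ valid when $R|x-x_0|\le 1$, one checks the product $R^{\,n+N_1+1-n/p_1}|x-x_0|^{-n/p_1}$ is, after taking the sup over $R$, dominated by $|x-x_0|^{-n(N_1+1)/\,?}$—the clean way is to note that the factor $(1+R|x-x_0|)^{-n/p_1}$ together with $R^{n+N_1+1}$ is maximized (over $R>0$) at $R\sim|x-x_0|^{-1}$, giving a bound $C|x-x_0|^{-(n+N_1+1)}$. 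Multiplying by $|x_0-y|^{N_1+1}\le(\sqrt n\,r)^{N_1+1}$ and $\|b\|_{L^\infty}$ and integrating over $Q$ (measure $r^n$) yields
\begin{equation*}
\big|T^\delta_R b(x)\big|\le C\,\|b\|_{L^\infty}\,\frac{r^{\,n+N_1+1}}{|x-x_0|^{\,n+N_1+1}}.
\end{equation*}

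Finally, since $N_1+1=[n(1/p_1-1)]+1>n(1/p_1-1)=n/p_1-n$, we have $n+N_1+1>n/p_1$, and because $r/|x-x_0|<1$ we may freely decrease the exponent from $n+N_1+1$ down to $n/p_1$, obtaining $|T^\delta_R b(x)|\le C\|b\|_{L^\infty}\,r^{n/p_1}/|x-x_0|^{n/p_1}$. Taking the supremum over $R>0$ gives the claim. The one delicate point—the part I expect to be the main obstacle to write cleanly—is the optimization in $R$: one must handle the regimes $R|x-x_0|\le 1$ and $R|x-x_0|>1$ separately (or observe the single-variable maximization of $R^{n+N_1+1}(1+R|x-x_0|)^{-n/p_1}$) to see that the worst case over all $R$ still produces the decay $|x-x_0|^{-(n+N_1+1)}$ and not something larger; everything else is the standard Taylor-remainder-plus-moment-cancellation computation.
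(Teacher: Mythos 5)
There is a genuine gap at the sup-over-$R$ step. After the Taylor/moment-cancellation step your bound for fixed $R$ contains the factor $R^{n+N_1+1}(1+R|x-x_0|)^{-n/p_1}$, and you claim this is maximized over $R>0$ at $R\sim|x-x_0|^{-1}$, giving $C|x-x_0|^{-(n+N_1+1)}$. That claim is false: since $N_1=[n(1/p_1-1)]$ satisfies $n(1/p_1-1)<N_1+1$, the exponent $n+N_1+1-n/p_1$ is strictly positive, so $R^{n+N_1+1}(1+R|x-x_0|)^{-n/p_1}\sim|x-x_0|^{-n/p_1}R^{\,n+N_1+1-n/p_1}\to\infty$ as $R\to\infty$, and the supremum over $R$ of your Taylor-remainder bound is infinite. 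The Taylor argument alone therefore only controls the regime $R\lesssim 1/r$ (equivalently $\varepsilon=1/R\gtrsim r$), where $R^{\,n+N_1+1-n/p_1}\le r^{-(n+N_1+1-n/p_1)}$ and one lands exactly on $(r/|x-x_0|)^{n/p_1}$. A further warning sign is that your intermediate claim $|T^\delta_Rb(x)|\le C\|b\|_{L^\infty}(r/|x-x_0|)^{n+N_1+1}$ uniformly in $R$ is strictly stronger than the lemma's conclusion (since $n+N_1+1>n/p_1$ and $r/|x-x_0|<1$), and it is not obtainable by this route.

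The missing regime of large $R$ must be handled without the moment condition, and that is precisely the paper's structure: it splits according to whether $\varepsilon=1/R\le r$ or $\varepsilon>r$, i.e.\ a comparison of the dilation parameter with the cube size $r$, not with $|x-x_0|$. For $\varepsilon\le r$ one uses only the decay $|\phi(z)|\le C|z|^{-n/p_1}$ from Lemma 3.1, so that $|(\phi_\varepsilon*b)(x)|\le C\varepsilon^{n/p_1-n}\int_Q|b(y)|\,|x-y|^{-n/p_1}\,dy\le C\|b\|_{L^\infty}\varepsilon^{n/p_1-n}r^n|x-x_0|^{-n/p_1}$ (using $|x-y|\ge|x-x_0|/2$), and then $\varepsilon^{n/p_1-n}\le r^{n/p_1-n}$ because $n/p_1-n>0$; no cancellation is needed here. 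For $\varepsilon>r$ your Taylor-plus-moments computation, written in the $\varepsilon$-normalization, produces the factor $\varepsilon^{-(n+N_1+1-n/p_1)}\le r^{-(n+N_1+1-n/p_1)}$, again yielding $C\|b\|_{L^\infty}(r/|x-x_0|)^{n/p_1}$. If you replace your optimization over $R$ by this two-case split, the rest of your argument is the standard one and coincides with the paper's proof.
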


\begin{proof}
Actually, this lemma was essentially contained in \cite{lee}. Here we give its proof for completeness. For any $\varepsilon>0$, we write
\begin{equation*}
(\phi_\varepsilon*b)(x)=\varepsilon^{-n}\int_{Q(x_0,r)}\phi\Big(\frac{x-y}{\varepsilon}\Big)b(y)\,dy.
\end{equation*}
Let us now consider the following two cases.

$(i)$ $0<\varepsilon\le r$. Note that $\delta=n/{p_1}-(n+1)/2$, then by Lemma 3.1, we have
\begin{equation*}
\big|(\phi_\varepsilon*b)(x)\big|\le C\cdot\varepsilon^{n/{p_1}-n}\int_{Q(x_0,r)}\frac{|b(y)|}{|x-y|^{n/{p_1}}}\,dy.
\end{equation*}
By our assumption, when $y\in Q(x_0,r)$, then we can easily get $|x-y|\ge|x-x_0|-|y-x_0|\ge\frac{|x-x_0|}{2}$. Observe that $0<p_1<1$, then $n/{p_1}-n>0$. Hence
\begin{equation}
\big|(\phi_\varepsilon*b)(x)\big|\le C\cdot\|b\|_{L^\infty}\frac{r^{n/{p_1}}}{|x-x_0|^{n/{p_1}}}.
\end{equation}

$(ii)$ $\varepsilon>r$. It is easy to see that the choice of $N_1$ in this lemma implies $\frac{n}{n+N_1+1}<p_1\le\frac{n}{n+N_1}$. Using the vanishing moment condition of $b$, Taylor's theorem and Lemma 3.1, we deduce
\begin{equation*}
\begin{split}
\big|(\phi_\varepsilon*b)(x)\big|&=\varepsilon^{-n}\bigg|\int_{Q(x_0,r)}
\Big[\phi\Big(\frac{x-y}{\varepsilon}\Big)-\sum_{|\gamma|\le N_1}\frac{D^\gamma\phi(\frac{x-x_0}{\varepsilon})}{\gamma!}\Big(\frac{y-x_0}{\varepsilon}\Big)^\gamma\Big] b(y)\,dy\bigg|\\
&\le\varepsilon^{-n}\cdot\Big(\frac{{\sqrt n}r}{2\varepsilon}\Big)^{N_1+1}
\int_{Q(x_0,r)}\sum_{|\gamma|=N_1+1}\Big|\frac{D^\gamma\phi(\frac{x-x_0-\theta (y-x_0)}{\varepsilon})}{\gamma!}\Big|\big|b(y)\big|\,dy\\
&\le C\cdot\frac{r^{N_1+1}}{\varepsilon^{n+N_1+1}}\int_{Q(x_0,r)}\Big|\frac{x-x_0-\theta (y-x_0)}{\varepsilon}\Big|^{-n/{p_1}}\big|b(y)\big|\,dy,
\end{split}
\end{equation*}
where $0<\theta<1$. As in the first case $(i)$, we have $|x-x_0|\ge2|y-x_0|$, which gives $\big|x-x_0-\theta (y-x_0)\big|\ge\frac{|x-x_0|}{2}$. So we have
\begin{equation*}
\big|(\phi_\varepsilon*b)(x)\big|\le C\cdot\|b\|_{L^\infty}\frac{r^{n+N_1+1}}{\varepsilon^{n+N_1+1-n/{p_1}}}\frac{1}{|x-x_0|^{n/{p_1}}}.
\end{equation*}
Notice that $n+N_1+1-n/{p_1}>0$, then for any $\varepsilon>r$, we have $\varepsilon^{n+N_1+1-n/{p_1}}>r^{n+N_1+1-n/{p_1}}$. Therefore
\begin{equation}
\big|(\phi_\varepsilon*b)(x)\big|\le C\cdot\|b\|_{L^\infty}\frac{r^{n/{p_1}}}{|x-x_0|^{n/{p_1}}}.
\end{equation}
Summarizing the inequalities (1) and (2) derived above and then taking the supremum over all $\varepsilon>0$, we obtain the desired estimate.
\end{proof}

Furthermore, by using the estimate in Lemma 3.2, we are able to prove the following result.

\begin{lemma}
Let $0<p_1<1$ and $\delta=n/{p_1}-(n+1)/2$. Assume that $n(1/{p_1}-1)$ is not a positive integer and $b$ satisfies the same conditions as in Lemma $3.2$, then for any $R>0$, we have
\begin{equation*}
G^+_w(T^\delta_R b)(x)\le C\cdot\|b\|_{L^\infty}\frac{r^{n/{p_1}}}{|x-x_0|^{n/{p_1}}}, \quad \mbox{whenever}\; \;|x-x_0|>(2\sqrt{n})r.
\end{equation*}
\end{lemma}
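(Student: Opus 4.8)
The plan is to fix $\varphi\in\mathscr A_{N,w}$ and $t>0$ and, using $T^\delta_R b=\phi_{1/R}*b$ and the associativity of convolution, to rewrite $(\varphi_t*(T^\delta_R b))(x)=((\varphi_t*\phi_{1/R})*b)(x)$; setting $\Phi:=\varphi_t*\phi_{1/R}$, we must bound $(\Phi*b)(x)$ uniformly in $\varphi$, $t$ and $R$. In the relevant range one has $p_1<p$, so $N_1=[n(1/p_1-1)]\ge[n(1/p-1)]=[n(q_w/p-1)]$; hence the hypothesis of Lemma 3.2 supplies $b$ with vanishing moments up to order $N_1$, and we may take the integer $N$ in the definition of $G^+_w$ to satisfy $N\ge N_1$, which puts the bounds $|D^\gamma\varphi(x)|\le(1+|x|)^{-(N+n+1)}$ at our disposal for all $|\gamma|\le N_1+1$.

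The key point is a uniform kernel estimate: with $s:=\max\{t,1/R\}$,
\[
\big|D^\gamma\Phi(z)\big|\le C\,s^{-n-|\gamma|}\big(1+|z|/s\big)^{-n/p_1},\qquad |\gamma|\le N_1+1,
\]
with $C$ independent of $\varphi,t,R,z$; equivalently, the rescaled profile $G(w):=s^n\Phi(sw)$ satisfies exactly the bounds of Lemma 3.1 for $|\gamma|\le N_1+1$. To prove this I would distinguish two cases. If $t\le1/R$, so $s=1/R$, a change of variables gives $\Phi=(\varphi_{Rt}*\phi)_{1/R}$ with $Rt\le1$; transferring the derivatives onto $\phi$, applying Lemma 3.1 to $\phi$, and splitting the convolution integral over $\{|u-v|\le(1+|v|)/2\}$ and its complement — on the latter splitting further by the size of $|u|$ so as to retain the decay of $D^\gamma\phi$ — yields $|D^\gamma(\varphi_{Rt}*\phi)(v)|\le C(1+|v|)^{-n/p_1}$ uniformly in $\varphi$ and in $Rt\le1$, and rescaling gives the claim. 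If $t>1/R$, so $s=t$, then for $|z|\le2t$ the crude bound $\|D^\gamma\Phi\|_\infty\le\|D^\gamma\varphi_t\|_\infty\|\phi_{1/R}\|_{L^1}\le Ct^{-n-|\gamma|}$ is enough ($\|\phi\|_{L^1}<\infty$ by Lemma 3.1), while for $|z|>2t$ one transfers the derivatives onto $\varphi_t$ and splits the convolution integral into the part with $|u|\le|z|/2$, handled by the rapid decay of $D^\gamma\varphi_t$ and $\|\phi_{1/R}\|_{L^1}=\|\phi\|_{L^1}$, and the remaining part, handled by the tail estimate $|\phi_{1/R}(u)|\le CR^{\,n-n/p_1}|u|^{-n/p_1}$ from Lemma 3.1 (valid since there $R|u|>Rt>1$); the inequalities $n<n/p_1<N+n+1$ and $Rt>1$ ensure every leftover power of $|z|/t$ or of $Rt$ carries a favorable sign.

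Given the kernel estimate, the argument finishes exactly as in Lemma 3.2, with $\Phi$ in the role of $\phi_\varepsilon$ and $s$ in the role of $\varepsilon$: when $s\le r$ one estimates $(\Phi*b)(x)$ directly from the size of $\Phi$; when $s>r$ one Taylor-expands $z\mapsto\Phi(x-z)$ about $x_0$ to degree $N_1$, uses the vanishing moments of $b$, and controls the remainder through the $(N_1+1)$-st derivatives of $\Phi$ via the kernel estimate. In either case the restriction $|x-x_0|>2\sqrt n\,r$ makes $|x-\xi|\sim|x-x_0|$ for every $\xi$ on the segment from $x_0$ to a point of $Q$, and we obtain $|(\varphi_t*T^\delta_R b)(x)|\le C\|b\|_{L^\infty}\,r^{n/p_1}/|x-x_0|^{n/p_1}$ with $C$ independent of $\varphi,t,R$; taking the supremum over $\varphi\in\mathscr A_{N,w}$ and $t>0$ yields the lemma.

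The only genuinely delicate step should be the kernel estimate in the case $t>1/R$, where the two scales $t$ and $1/R$ may be far apart and one must consistently let the Schwartz factor $\varphi_t$ absorb the derivatives while peeling off the slowly decaying tail of $\phi_{1/R}$ using Lemma 3.1. The hypothesis that $n(1/p_1-1)=\delta-(n-1)/2$ is not a positive integer is the same technical condition that appears in Theorem 1.2; it keeps the exponents occurring in the estimates strictly away from their critical values and thereby rules out a borderline logarithmic loss.
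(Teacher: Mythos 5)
Your argument is correct, but it proceeds along a genuinely different route from the paper's. The paper never composes the two kernels: instead it (i) verifies via the Fourier transform and Leibniz's rule that $T^\delta_R b$ itself inherits the vanishing moments $\int T^\delta_R b(y)\,y^\gamma\,dy=0$ for $|\gamma|\le N_1$, and (ii) treats $T^\delta_R b$ as a molecule, bounded near $Q$ by $|T^\delta_R b|\le T^\delta_* b\le C\,Mb\le C\|b\|_{L^\infty}$ and decaying like $\|b\|_{L^\infty}r^{n/p_1}|y-x_0|^{-n/p_1}$ away from $Q$ by Lemma 3.2; it then subtracts the degree-$N_1$ Taylor polynomial of $\varphi_t$ and splits the integral into the three regions $|y-x_0|\le\sqrt n\,r$, $\sqrt n\,r<|y-x_0|\le|x-x_0|/2$, and $|y-x_0|>|x-x_0|/2$. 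You instead write $\varphi_t*(T^\delta_R b)=(\varphi_t*\phi_{1/R})*b$ and prove that the composite kernel $\Phi$ obeys the Lemma 3.1 bounds at the single scale $s=\max\{t,1/R\}$, uniformly in $\varphi$, $t$, $R$, after which Lemma 3.2's two-case argument applies verbatim. Your two-scale kernel estimate is the only nontrivial new ingredient, and the case analysis you sketch for it does go through (the needed inequalities $n<n/p_1<N_1+n+1$ and $(Rt)^{n-n/p_1}\le1$ all hold strictly). What each approach buys: the paper's is modular, reusing Lemma 3.2 and the inequality $T^\delta_*f\le CMf$ as black boxes at the cost of the moment computation for $T^\delta_R b$ and a three-region splitting; yours is self-contained and structurally cleaner once the kernel estimate is in hand. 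One point worth flagging: your closing remark about the hypothesis that $n(1/p_1-1)$ is not a positive integer is not accurate for your own proof. In the paper that hypothesis is genuinely needed — it guarantees $n/p_1-n>N_1$, which makes the far-region integral $\int_{|y-x_0|>|x-x_0|/2}|y-x_0|^{-(n/p_1-j)}\,dy$ converge for $j=N_1$ in the term $K_3$ — whereas your composite-kernel argument never encounters that integral and appears to prove the lemma without the hypothesis at all. That is a strengthening, not a gap, but you should either say so explicitly or locate where (if anywhere) your argument actually uses the assumption.
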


\begin{proof}
We first claim that for every multi-index $\gamma$ with $|\gamma|\le N_1$ and for any $0<R<\infty$, the following identity holds
\begin{equation}
\int_{\mathbb R^n}T^\delta_R b(y)y^{\gamma}\,dy=0.
\end{equation}
In fact, by using the Leibnitz's rule and vanishing moment condition of $b$, we thus have
\begin{equation*}
\begin{split}
\int_{\mathbb R^n}T^\delta_R b(y)y^{\gamma}\,dy&=(T^\delta_R b(y)y^{\gamma}){\mbox{\textasciicircum}}(0)\\
&=C\cdot D^{\gamma}(\widehat{T^\delta_R b})(0)\\
\end{split}
\end{equation*}
\begin{equation*}
\begin{split}
&=C\cdot D^{\gamma}(\widehat{\phi_{1/R}}\cdot\widehat b)(0)\\
&=C\cdot\sum_{|\alpha|+|\beta|=|\gamma|}(D^\alpha\widehat{\phi_{1/R}})(0)\cdot(D^\beta\widehat b)(0)\\
&=0.
\end{split}
\end{equation*}
Then for any $t>0$, by the identity (3) derived above, we will split the expression $\varphi_t*(T^\delta_R b)$ into three parts and estimate each term respectively, where $\varphi\in\mathscr A_{N_1,w}$.
\begin{equation*}
\begin{split}
\big|\varphi_t*(T^\delta_R b)(x)\big|=&\bigg|\int_{\mathbb R^n}t^{-n}\Big[\varphi\Big(\frac{x-y}{t}\Big)-\sum_{|\gamma|\le N_1}\frac{D^\gamma\varphi(\frac{x-x_0}{t})}{\gamma!}\Big(\frac{y-x_0}{t}\Big)^\gamma\Big]T^\delta_R b(y)\,dy\bigg|\\
\le&\,t^{-n}\bigg|\int_{|y-x_0|\le{\sqrt n} r}\Big[\varphi\Big(\frac{x-y}{t}\Big)-\sum_{|\gamma|\le N_1}\frac{D^\gamma\varphi(\frac{x-x_0}{t})}{\gamma!}\Big(\frac{y-x_0}{t}\Big)^\gamma\Big]T^\delta_R b(y)\,dy\bigg|\\
&+t^{-n}\bigg|\int_{{\sqrt n} r<|y-x_0|\le\frac{|x-x_0|}{2}}\Big[\varphi\Big(\frac{x-y}{t}\Big)-\sum_{|\gamma|\le N_1}\frac{D^\gamma\varphi(\frac{x-x_0}{t})}{\gamma!}\Big(\frac{y-x_0}{t}\Big)^\gamma\Big]T^\delta_R b(y)\,dy\bigg|\\
&+t^{-n}\bigg|\int_{|y-x_0|>\frac{|x-x_0|}{2}}\Big[\varphi\Big(\frac{x-y}{t}\Big)-\sum_{|\gamma|\le N_1}\frac{D^\gamma\varphi(\frac{x-x_0}{t})}{\gamma!}\Big(\frac{y-x_0}{t}\Big)^\gamma\Big]T^\delta_R b(y)\,dy\bigg|\\
=&\,K_1+K_2+K_3.
\end{split}
\end{equation*}
For the term $K_1$, by using Taylor's theorem, we obtain
\begin{equation*}
K_1\le t^{-n}\cdot\Big(\frac{{\sqrt n}r}{t}\Big)^{N_1+1}
\int_{|y-x_0|\le\sqrt{n}r}\sum_{|\gamma|=N_1+1}\Big|\frac{D^\gamma\varphi(\frac{x-x_0-\theta' (y-x_0)}{t})}{\gamma!}\Big|\big|T^\delta_*(b)(y)\big|\,dy,
\end{equation*}
where $0<\theta'<1$. Since $0<p_1<1$ and $\delta=n/{p_1}-(n+1)/2$, then $\delta>(n-1)/2$. In this case, it is well known that the following inequality (see \cite{lu2,stein2})
\begin{equation}
T^\delta_* f(x)\le C\cdot M f(x),
\end{equation}
holds for any function $f$, where $M$ denotes the Hardy-Littlewood maximal operator. For any point $x$ with $|x-x_0|>(2\sqrt n)r$ and $|y-x_0|\le\sqrt{n}r$, as before, we have $\big|x-x_0-\theta'(y-x_0)\big|\ge\frac{|x-x_0|}{2}$. Observe that $\varphi\in\mathscr A_{N_1,w}$, then it follows immediately from the inequality (4) that
\begin{equation*}
\begin{split}
K_1&\le C\cdot\frac{r^{N_1+1}}{t^{n+N_1+1}}\int_{|y-x_0|\le\sqrt{n}r}\Big|\frac{x-x_0-\theta' (y-x_0)}{t}\Big|^{-n-N_1-1}\big|T^\delta_*(b)(y)\big|\,dy\\
&\le C\cdot\|M(b)\|_{L^\infty}\frac{r^{n+N_1+1}}{|x-x_0|^{n+N_1+1}}\\
&\le C\cdot\|b\|_{L^\infty}\frac{r^{n/{p_1}}}{|x-x_0|^{n/{p_1}}},
\end{split}
\end{equation*}
where in the last inequality we have used the facts that $n+N_1+1-n/{p_1}>0$ and $\big(\frac{r}{|x-x_0|}\big)^{n+N_1+1-n/{p_1}}\le1$.

On the other hand, for the term $K_2$, we note that $|y-x_0|>\sqrt{n}r$ and $\varphi\in\mathscr A_{N_1,w}$, then it follows from Taylor's theorem and Lemma 3.2 that
\begin{equation*}
\begin{split}
K_2&\le t^{-n}\int_{\sqrt{n}r<|y-x_0|\le\frac{|x-x_0|}{2}}
\sum_{|\gamma|=N_1+1}\Big|\frac{D^\gamma\varphi(\frac{x-x_0-\theta' (y-x_0)}{t})}{\gamma!}\Big|\Big|\frac{y-x_0}{t}\Big|^{N_1+1}\big|T^\delta_*(b)(y)\big|\,dy\\
&\le C\cdot\|b\|_{L^\infty}\, t^{-n}\int_{\sqrt{n}r<|y-x_0|\le\frac{|x-x_0|}{2}}\frac{t^{n+N_1+1}}{|x-x_0|^{n+N_1+1}}\cdot
\frac{|y-x_0|^{N_1+1}}{t^{N_1+1}}
\cdot\frac{r^{n/{p_1}}}{|y-x_0|^{n/{p_1}}}\,dy\\
&\le C\cdot\|b\|_{L^\infty}\frac{r^{n/{p_1}}}{|x-x_0|^{n+N_1+1}}\int_{\sqrt{n}r<|y-x_0|\le\frac{|x-x_0|}{2}}
|y-x_0|^{N_1+1-n/{p_1}}\,dy.
\end{split}
\end{equation*}
If we rewrite the integral in polar coordinates and note the fact that $-1<n+N_1-n/{p_1}\le0$, then we can get
\begin{align}
\int_{\sqrt{n}r<|y-x_0|\le\frac{|x-x_0|}{2}}|y-x_0|^{N_1+1-n/{p_1}}\,dy\notag
&\le C\int_{\sqrt{n}r}^{\frac{|x-x_0|}{2}}\rho^{n+N_1-n/{p_1}}\,d\rho\notag\\
&\le C\cdot|x-x_0|^{-n/{p_1}+n+N_1+1}.
\end{align}
Substituting the above inequality (5) into the term $K_2$, we obtain
\begin{equation*}
K_2\le C\cdot\|b\|_{L^\infty}\frac{r^{n/{p_1}}}{|x-x_0|^{n/{p_1}}}.
\end{equation*}
For the last term $K_3$, observe that $|y-x_0|>\frac{|x-x_0|}{2}>\sqrt{n}r$. Therefore, by using Lemma 3.2 and the fact that $\varphi\in\mathscr A_{N_1,w}$, we deduce
\begin{equation*}
\begin{split}
K_3&\le t^{-n}\int_{|y-x_0|>\frac{|x-x_0|}{2}}\bigg\{\Big|\varphi\Big(\frac{x-y}{t}\Big)\Big|+\sum_{j=0}^{N_1}\sum_{|\gamma|=j}
\Big|\frac{D^\gamma\varphi(\frac{x-x_0}{t})}{\gamma!}\Big|\Big|\frac{y-x_0}{t}\Big|^j\bigg\}
\big|T^\delta_*(b)(y)\big|\,dy\\
&\le C\cdot\|b\|_{L^\infty}\int_{|y-x_0|>\frac{|x-x_0|}{2}}\bigg\{\big|\varphi_t(x-y)\big|+
\sum_{j=0}^{N_1}\frac{|y-x_0|^j}{|x-x_0|^{n+j}}
\bigg\}\cdot
\frac{r^{n/{p_1}}}{|y-x_0|^{n/{p_1}}}\,dy\\
&\le C\cdot\|b\|_{L^\infty}\bigg(\frac{r^{n/{p_1}}}{|x-x_0|^{n/{p_1}}}\|\varphi_t\|_{L^1}+
\sum_{j=0}^{N_1}\frac{r^{n/{p_1}}}{|x-x_0|^{n+j}}\int_{|y-x_0|>\frac{|x-x_0|}{2}}\frac{dy}{|y-x_0|^{n/{p_1}-j}}\bigg).
\end{split}
\end{equation*}
By our assumption (say $n(1/{p_1}-1)$ is not a positive integer), we know that $n/{p_1}-n>N_1=[n(1/{p_1}-1)]$. Thus we have $n/{p_1}-n-j>0$ for any $0\le j\le N_1$. Making use of the polar coordinates again, we obtain
\begin{equation*}
\begin{split}
\int_{|y-x_0|>\frac{|x-x_0|}{2}}\frac{dy}{|y-x_0|^{n/{p_1}-j}}&\le C\cdot
\int_{\frac{|x-x_0|}{2}}^\infty\frac{1}{\rho^{n/{p_1}-n-j+1}}\,d\rho\\
&\le C\cdot\frac{1}{|x-x_0|^{n/{p_1}-n-j}}.
\end{split}
\end{equation*}
Hence
\begin{equation*}
K_3\le C\cdot\|b\|_{L^\infty}\frac{r^{n/{p_1}}}{|x-x_0|^{n/{p_1}}}.
\end{equation*}
Therefore, combining the above estimates for $K_1$, $K_2$ and $K_3$, and then taking the supremum over all $t>0$ and all $\varphi\in\mathscr A_{N_1,w}$, we obtain the desired result.
\end{proof}

\section{Proof of Theorem 1.1}

\begin{proof}
For any given $\lambda>0$, we may choose $k_0\in\mathbb Z$ such that $2^{k_0}\le\lambda<2^{k_0+1}$. For every $f\in WH^p_w(\mathbb R^n)$, then by Theorem 2.3, we can write
\begin{equation*}
f=\sum_{k=-\infty}^\infty f_k=\sum_{k=-\infty}^{k_0} f_k+\sum_{k=k_0+1}^\infty f_k=F_1+F_2,
\end{equation*}
where $F_1=\sum_{k=-\infty}^{k_0} f_k=\sum_{k=-\infty}^{k_0}\sum_i b^k_i$, $F_2=\sum_{k=k_0+1}^\infty f_k=\sum_{k=k_0+1}^\infty\sum_i b^k_i$ and $\{b^k_i\}$ satisfies $(a)$, $(b)$ and $(c')$. Then we have
\begin{equation*}
\begin{split}
&\lambda^p\cdot w\big(\big\{x\in\mathbb R^n:|T^\delta_*f(x)|>\lambda\big\}\big)\\
\le\,&\lambda^p\cdot w\big(\big\{x\in\mathbb R^n:|T^\delta_*F_1(x)|>\lambda/2\big\}\big)+\lambda^p\cdot w\big(\big\{x\in\mathbb R^n:|T^\delta_*F_2(x)|>\lambda/2\big\}\big)\\
=\,&I_1+I_2.
\end{split}
\end{equation*}
We first claim that
\begin{equation}
\|F_1\|_{L^2_w}\le C\cdot\lambda^{1-p/2}\|f\|^{p/2}_{WH^p_w}.
\end{equation}
In fact, since $supp\,b^k_i\subseteq Q^k_i=Q(x^k_i,r^k_i)$ and $\|b^k_i\|_{L^\infty}\le C 2^k$ by Theorem 2.3, then it follows from Minkowski's integral inequality that
\begin{equation*}
\begin{split}
\|F_1\|_{L^2_w}&\le\sum_{k=-\infty}^{k_0}\sum_i\|b^k_i\|_{L^2_w}\\
&\le\sum_{k=-\infty}^{k_0}\sum_i\|b^k_i\|_{L^\infty}w\big(Q^k_i\big)^{1/2}.
\end{split}
\end{equation*}
For each $k\in\mathbb Z$, by using the bounded overlapping property of the cubes $\{Q^k_i\}$ and the fact that $1-p/2>0$, we thus obtain
\begin{equation*}
\begin{split}
\|F_1\|_{L^2_w}&\le C\sum_{k=-\infty}^{k_0}2^k\Big(\sum_i w(Q^k_i)\Big)^{1/2}\\
&\le C\sum_{k=-\infty}^{k_0}2^{k(1-p/2)}\|f\|^{p/2}_{WH^p_w}\\
&\le C\sum_{k=-\infty}^{k_0}2^{(k-k_0)(1-p/2)}\cdot\lambda^{1-p/2}\|f\|^{p/2}_{WH^p_w}\\
&\le C\cdot\lambda^{1-p/2}\|f\|^{p/2}_{WH^p_w}.
\end{split}
\end{equation*}
Since $w\in A_1$, then $w\in A_2$. By the conditions, we know that $\delta>{(n-1)}/2$. Hence, it follows from Chebyshev's inequality, the $L^2_w$ boundedness of $M$ and (4) that
\begin{equation*}
\begin{split}
I_1&\le \lambda^p\cdot\frac{4}{\lambda^2}\big\|T^\delta_*(F_1)\big\|^2_{L^2_w}\\
&\le C\cdot\lambda^{p-2}\big\|F_1\big\|^2_{L^2_w}\\
&\le C\|f\|^{p}_{WH^p_w}.
\end{split}
\end{equation*}

Now we turn our attention to the estimate of $I_2$. If we set
\begin{equation*}
A_{k_0}=\bigcup_{k=k_0+1}^\infty\bigcup_i \widetilde{Q^k_i},
\end{equation*}
where $\widetilde{Q^k_i}=Q(x^k_i,\tau^{{(k-k_0)p}/n}(2\sqrt n)r^k_i)$ and $\tau$ is a fixed positive number such that $1<\tau<2$. So we can further decompose $I_2$ as
\begin{equation*}
\begin{split}
I_2&\le\lambda^p\cdot w\big(\big\{x\in A_{k_0}:|T^\delta_*F_2(x)|>\lambda/2\big\}\big)+
\lambda^p\cdot w\big(\big\{x\in (A_{k_0})^c:|T^\delta_*F_2(x)|>\lambda/2\big\}\big)\\
&=I'_2+I''_2.
\end{split}
\end{equation*}
Since $w\in A_1$, then by Lemma 2.1, we can get
\begin{equation*}
\begin{split}
I'_2&\le\lambda^p\sum_{k=k_0+1}^\infty\sum_iw\big(\widetilde{Q^k_i}\big)\\
&\le C\cdot\lambda^p\sum_{k=k_0+1}^\infty\tau^{(k-k_0)p}\sum_iw(Q^k_i)\\
&\le C\|f\|^{p}_{WH^p_w}\sum_{k=k_0+1}^\infty\Big(\frac{\tau}{2}\Big)^{(k-k_0)p}\\
&\le C\|f\|^{p}_{WH^p_w}.
\end{split}
\end{equation*}
In order to estimate the last term $I''_2$, we observe that $\delta>n/p-(n+1)/2$, then we can find a positive number $0<p_1<p\le1$ such that $\delta=n/{p_1}-(n+1)/2$. Set $N_1=[n({q_w}/{p_1}-1)]=[n(1/{p_1}-1)]\ge[n(1/{p}-1)]$. Then we have
\begin{equation*}
\begin{split}
I''_2&\le 2^p\int_{(A_{k_0})^c}\big|T^\delta_*F_2(x)\big|^pw(x)\,dx\\
&\le 2^p\sum_{k=k_0+1}^\infty\sum_i\int_{\big(\widetilde{Q^k_i}\big)^c}\big|T^\delta_*b^k_i(x)\big|^pw(x)\,dx.
\end{split}
\end{equation*}
When $x\in\big(\widetilde{Q^k_i}\big)^c$, then a direct calculation shows that $|x-x^k_i|\ge\tau^{{(k-k_0)p}/n}\sqrt n r^k_i>\sqrt n r^k_i$. Obviously, by Theorem 2.3, we know that all the functions $b^k_i$ satisfy the conditions in Lemma 3.2. Note also that $w\in A_1$ implies $w\in A_{p/{p_1}}$. Applying Lemma 3.2 and Lemma 2.2, we can deduce
\begin{equation*}
\begin{split}
I''_2&\le C\sum_{k=k_0+1}^\infty\sum_i2^{kp}(r^k_i)^{{np}/{p_1}}
\int_{|x-x^k_i|\ge\tau^{{(k-k_0)p}/n}\sqrt n r^k_i}\frac{w(x)}{|x-x^k_i|^{{np}/{p_1}}}\,dx\\
&= C\sum_{k=k_0+1}^\infty\sum_i2^{kp}(r^k_i)^{{np}/{p_1}}
\int_{|y|\ge\tau^{{(k-k_0)p}/n}\sqrt n r^k_i}\frac{w_1(y)}{|y|^{{np}/{p_1}}}\,dy\\
&\le C\sum_{k=k_0+1}^\infty\sum_i2^{kp}\big(\tau^{{(k-k_0)p}/n}\big)^{{-np}/{p_1}}w_1\big(Q(0,\tau^{{(k-k_0)p}/n}\cdot r^k_i)\big)\\
&= C\sum_{k=k_0+1}^\infty\sum_i2^{kp}\big(\tau^{{(k-k_0)p}/n}\big)^{{-np}/{p_1}}w\big(Q(x^k_i,\tau^{{(k-k_0)p}/n}\cdot r^k_i)\big),
\end{split}
\end{equation*}
where $w_1(x)=w(x+x^k_i)$ is the translation of $w(x)$. It is easy to see that $w_1\in A_1$ whenever $w\in A_1$. Therefore, it follows immediately from Lemma 2.1 that
\begin{equation*}
\begin{split}
I''_2&\le C\sum_{k=k_0+1}^\infty\sum_i2^{kp}\big(\tau^{(k-k_0)p}\big)^{1-p/{p_1}}w(Q^k_i)\\
&\le C\|f\|^{p}_{WH^p_w}\sum_{k=k_0+1}^\infty\big(\tau^{(k-k_0)p}\big)^{1-p/{p_1}}\\
&\le C\|f\|^{p}_{WH^p_w},
\end{split}
\end{equation*}
where the last series is convergent since $1-p/{p_1}<0$. Summarizing the above estimates for $I_1$ and $I_2$ and then taking the supremum over all $\lambda>0$, we complete the proof of Theorem 1.1.
\end{proof}

\section{Proof of Theorem 1.2}

\begin{proof}
We follow the strategy of the proof in Theorem 1.1. For any given $\lambda>0$, we are able to choose $k_0\in\mathbb Z$ such that $2^{k_0}\le\lambda<2^{k_0+1}$. For every $f\in WH^p_w(\mathbb R^n)$, as in Theorem 1.1, we have the decomposition
\begin{equation*}
f=\sum_{k=-\infty}^\infty f_k=\sum_{k=-\infty}^{k_0}\sum_i b^k_i+\sum_{k=k_0+1}^\infty\sum_i b^k_i=F_1+F_2.
\end{equation*}
Write
\begin{equation*}
\begin{split}
&\lambda^p\cdot w\big(\big\{x\in\mathbb R^n:|G^+_w(T^\delta_Rf)(x)|>\lambda\big\}\big)\\
\le\,&\lambda^p\cdot w\big(\big\{x\in\mathbb R^n:|G^+_w(T^\delta_RF_1)(x)|>\lambda/2\big\}\big)+\lambda^p\cdot w\big(\big\{x\in\mathbb R^n:|G^+_w(T^\delta_RF_2)(x)|>\lambda/2\big\}\big)\\
=\,&J_1+J_2.
\end{split}
\end{equation*}
Let us first deal with the term $J_1$. For any function $f$, we can easily prove that
\begin{equation*}
G^+_w f(x)\le C\cdot M f(x).
\end{equation*}
Since $w\in A_1$, then $w\in A_2$. Thus, by using Chebyshev's inequality, the $L^2_w$ boundedness of $M$ and the previous inequality (6), we get
\begin{equation*}
\begin{split}
J_1&\le \lambda^p\cdot\frac{4}{\lambda^2}\big\|G^+_w(T^\delta_R F_1)\big\|^2_{L^2_w}\\
&\le C\cdot\lambda^{p-2}\big\|T^\delta_*(F_1)\big\|^2_{L^2_w}\\
&\le C\cdot\lambda^{p-2}\big\|F_1\big\|^2_{L^2_w}\\
&\le C\|f\|^{p}_{WH^p_w}.
\end{split}
\end{equation*}

To estimate the other term $J_2$, we set
\begin{equation*}
B_{k_0}=\bigcup_{k=k_0+1}^\infty\bigcup_i \widetilde{\widetilde{Q^k_i}},
\end{equation*}
where $\widetilde{\widetilde{Q^k_i}}=Q(x^k_i,\tau^{{(k-k_0)p}/n}(4\sqrt n)r^k_i)$ and $\tau$ is also a fixed real number such that $1<\tau<2$. As before, we will further decompose $J_2$ as
\begin{equation*}
\begin{split}
J_2\le&\,\lambda^p\cdot w\big(\big\{x\in B_{k_0}:|G^+_w(T^\delta_RF_2)(x)|>\lambda/2\big\}\big)\\
&+
\lambda^p\cdot w\big(\big\{x\in (B_{k_0})^c:|G^+_w(T^\delta_RF_2)(x)|>\lambda/2\big\}\big)\\
=&\,J'_2+J''_2.
\end{split}
\end{equation*}
Using the same arguments as that of Theorem 1.1, we can see that
\begin{equation*}
\begin{split}
J'_2&\le\lambda^p\sum_{k=k_0+1}^\infty\sum_iw\big(\widetilde{\widetilde{Q^k_i}}\big)\\
&\le C\cdot\lambda^p\sum_{k=k_0+1}^\infty\tau^{(k-k_0)p}\sum_iw(Q^k_i)\\
&\le C\|f\|^{p}_{WH^p_w}.
\end{split}
\end{equation*}
Again, we may find a positive number $0<p_1<p\le1$ such that $\delta=n/{p_1}-(n+1)/2$. Set $N_1=[n(1/{p_1}-1)]\ge[n(1/{p}-1)]$. Thus, it follows from Chebyshev's inequality that
\begin{equation*}
\begin{split}
J''_2&\le 2^p\int_{(B_{k_0})^c}\big|G^+_w(T^\delta_RF_2)(x)\big|^pw(x)\,dx\\
&\le 2^p\sum_{k=k_0+1}^\infty\sum_i
\int_{\big(\widetilde{\widetilde{Q^k_i}}\big)^c}\big|G^+_w(T^\delta_Rb^k_i)(x)\big|^pw(x)\,dx.
\end{split}
\end{equation*}
For any $x\in\big(\widetilde{\widetilde{Q^k_i}}\big)^c$, then a simple computation leads to that $|x-x^k_i|\ge\tau^{{(k-k_0)p}/n}(2\sqrt n)r^k_i>(2\sqrt n)r^k_i$. In addition, we also note that $\delta-{(n-1)}/2=n(1/{p_1}-1)$ is not a positive integer by our assumption. Therefore, using Lemmas 2.1, 2.2 and 3.3 and following along the same lines as that of Theorem 1.1, we finally obtain
\begin{equation*}
\begin{split}
J''_2&\le C\sum_{k=k_0+1}^\infty\sum_i2^{kp}(r^k_i)^{{np}/{p_1}}
\int_{|x-x^k_i|\ge\tau^{{(k-k_0)p}/n}(2\sqrt n)r^k_i}\frac{w(x)}{|x-x^k_i|^{{np}/{p_1}}}\,dx\\
&\le C\sum_{k=k_0+1}^\infty\sum_i2^{kp}\big(\tau^{{(k-k_0)p}/n}\big)^{{-np}/{p_1}}w\big(Q(x^k_i,\tau^{{(k-k_0)p}/n}\cdot 2r^k_i)\big)\\
&\le C\|f\|^{p}_{WH^p_w}.
\end{split}
\end{equation*}
Combining the above estimates for $J_1$ and $J_2$ and then taking the supremum over all $\lambda>0$, we conclude the proof of Theorem 1.2.
\end{proof}

\end{document}